\newtheorem{theorem}{Theorem}
\newtheorem{corollary}[theorem]{Corollary}
\newtheorem{definition}[theorem]{Definition}
\newtheorem{lemma}[theorem]{Lemma}
\newtheorem{proposition}[theorem]{Proposition}
\newtheorem{remark}[theorem]{Remark}
\newcommand{\adim}{\operatorname{adim}}
\newcommand{\adimms}{\operatorname{msad}}
\newcommand{\msl}{\{\hspace*{-0.1cm}|}
\newcommand{\msr}{|\hspace*{-0.1cm}\}}
\begin{document}

\title{On the $(k,\ell)$-multiset anonymity measure for social graphs}
\date{}
\author{Alejandro Estrada-Moreno$^1$, Elena Fern\'andez$^2$, Dorota Kuziak$^2$,\\  Manuel Mu\~noz-M\'arquez$^2$, Rolando Trujillo-Rasua$^1$, Ismael G. Yero$^3$\\
\\ 
$^1$Departament d'Enginyeria Inform\`atica i Matem\`atiques,\\ Universitat Rovira i Virgili, Campus Sescelades, Spain\\
\texttt\small{\{alejandro.estrada, rolando.trujillo\}@urv.cat}\\
$^2$Departamento de Estad\'istica e Investigaci\'on Operativa,\\ Universidad de C\'adiz, Spain \\
\texttt\small{\{elena.fernandez,dorota.kuziak,manuel.munoz\}@uca.es}\\
$^3$Departamento de Matem\'aticas,\\ Universidad de C\'adiz, Algeciras Campus, Spain\\
\texttt\small{ismael.gonzalez@uca.es}
}

\maketitle

\begin{abstract}
The publication of social graphs must be preceded by a rigorous analysis of privacy threats against social graph users.
When the threat comes from inside the social network itself, the threat is called an active attack, and the de-facto privacy measure used to quantify the resistance to such an attack is the $(k,\ell)$-anonymity.
The original formulation of $(k,\ell)$-anonymity represents the adversary's knowledge as a vector of distances to the set of attacker nodes. In this article, we argue that such adversary is too strong when it comes to counteracting active attacks.
We, instead, propose a new formulation where the adversary's knowledge is the multiset of distances to the set of attacker nodes.
The goal of this article is to study the
$(k,\ell)$-multiset anonymity from a graph theoretical point of view, while establishing its relationship to $(k,\ell)$-anonymity in one hand, and considering the $k$-multiset antiresolving sets as its theoretical frame, in a second one. That is, we prove properties of some graph families in relation to whether they contain a set of attacker nodes that breaks the  $(k,\ell)$-multiset anonymity.
From a practical point of view, we develop a linear programming formulation of the $k$-multiset antiresolving sets that allows us to calculate the resistance of social graphs against active attacks. This is useful for analysts who wish to know the level of privacy offered by a graph.
\end{abstract}\vspace{0.5mm}
\textbf{Math.\ Subj.\ Class. 2020:} 05C12, 05C76, 68R10, 68P27, 68M25\vspace{0.5mm}\\
\textbf{Keywords}: $k$-multiset antidimension; $k$-multiset antiresolving sets; $(k,\ell)$-multiset anonymity; $k$-metric antidimension.


\section{Introduction}

This article studies graph-theoretical properties of social graphs that impact their resistance to active privacy attacks (active attacks for short). The goal of an active attack is to learn sensitive information about users whose data was published in the form of an anonymous social graph, \textit{i.e.}, a graph where vertices represent supposedly anonymous users and edges their relations.
As opposed to passive attackers, active attackers can interact with other users prior the publication of the social network graph, making them significantly more threatening to an individual's privacy \cite{Backstrom2007}.

Active attackers re-identify a user in a (supposedly) anonymous graph as follows. Prior the publication of the social graph, a set of attacker nodes in the social network, known as \emph{attacker} nodes in the literature, influences the graph data by establishing connections with other victims and between themselves. Those connections are carefully crafted to make them unique for each victim.
Once the social network is published, the attackers use subgraph search to retrieve the set of attacker nodes. Victims are ultimately re-identified by looking for their connection patterns with respect to the set of  attacker nodes.

Defending against active attacks is notoriously difficult, as defenders have no means to detect attacker nodes reliably.
To calculate the re-identification probability of a victim, Trujillo-Rasua and Yero \cite{TRUJILLORASUA2016403} introduced $(k, \ell)$-anonymity.
Like most privacy models based on the popular notion of $k$-anonymity \cite{kanonymity}, $(k, \ell)$-anonymity partitions the set of users into anonymity sets and argues that users within the same anonymity set are indistinguishable. Because the adversary in an active attack is a set of attacker nodes $S = \{s_1, \ldots, s_l\}$ with connections to their victims in the social graph, the $(k, \ell)$-anonymity considers two users to be within the same anonymity set if their \emph{connection patterns} to $S$ are equal.
Trujillo-Rasua and Yero defined a connection pattern
to be the vector of (geodetic) distances from the set of attacker nodes to the victim node, leading to the following privacy measure.

\begin{definition}[$(k, \ell)$-anonymity]
Consider a vertex $u$ and a subset of vertices $S$ within a graph $G$.
\begin{itemize}
    \item The \emph{metric representation} of $u$ with respect to $S$ is
$r(u|S)=(d_G(u, s_1), \ldots, d_G(u, s_l))$ where $d_G(x,y)$ represents the {\em distance} between $x$ and $y$.
    \item The \emph{anonymity set} of a user $u$ with respect to $S$ is given by $S_{[u]} = \{v \in V(G) | r(u | S) = r(v |S)\}$.
    \item $S$ is called a \emph{$k$-antiresolving set} if $k$ is the largest integer such that
    for every vertex $v \in V(G) \setminus S$ it holds that $|S_{[u]}| \geq k$.
\end{itemize}
The graph $G$ is said to be \emph{$(k, \ell)$-anonymous} if every subset of vertices $S$ of $G$ with $|S| \leq \ell$ is a $k'$-antiresolving set with $k' \leq k$.
\end{definition}

In other words, a graph $G$ is $(k, \ell)$-anonymous, or satisfies $(k, \ell)$-anonymity, if for every possible set of attacker nodes $S$ and every possible victim $v \in V(G) \setminus S$, either $v$'s anonymity set is sufficiently large ($\geq k$) or the set of attacker nodes is impractically large ($> \ell$).


Since the introduction of $(k, \ell)$-anonymity, efforts have been made on understanding the properties of graphs that impact $k$ and $\ell$.
Some works have focused on determining the size of the smallest $k$-antiresolving sets in $G$, known as the \emph{$k$-metric antidimension} of $G$ \cite{Kratica2019KmetricAO,Tang2021,Trujillo2016}, others have studied the computational aspects of $(k, \ell)$-anonymity \cite{CHATTERJEE201953,DASGUPTA201987,Fernandez2023,TRUJILLORASUA2016403,Zhang2017}, and others have introduced graph-transformation techniques to obtain $(k, \ell)$-anonymous graphs with minimal perturbation \cite{Erfani2019,conditional,MauwRT22,MauwRT19,MauwTX16}.

A drawback of the $(k, \ell)$-anonymity privacy model is the assumption that the attacker subgraph $S$ can be retrieved entirely and that it contains no non-trivial automorphism. That is to say, $(k, \ell)$-anonymity assumes that the attacker can correctly re-identify all attacker nodes after anonymization.
Such assumption has been proven too strong \cite{MauwRT19} when the graph has been anonymised via perturbation. Indeed, existing perturbation techniques against active attacks, see \cite{Erfani2019,MauwRT18,conditional,MauwRT19,MauwTX16}, quickly diminish the probability of success of an active adversary retrieving the set of attacker nodes in the correct order.
That is why, in this article, we propose to drop the assumption that the attacker subgraph in the anonymised graph has no non-trivial automorphism. Fundamentally, this implies reinterpreting what is meant by a connection pattern from a victim to a set of attacker nodes. Instead of considering the connection pattern of a victim $v$ to be the vector of distances to $S$, which requires totally ordering the vertices in $S$, we propose to define it as the
\emph{multiset} of distances to $S$, thereby dropping the assumption that $S$ can be totally ordered by the attacker.
This leads to a new notion of $(k, \ell)$-anonymity, which we call $(k, \ell)$-multiset anonymity.

\begin{definition}[$(k, \ell)$-multiset anonymity]
Given a set of vertices $S$ and a vertex $v\in V(G)$, we define:
\begin{itemize}
    \item the \emph{multiset representation} of $v$ with respect to $S=\{s_1,\dots,s_l\}$ to be the multiset $m(v|S)=\msl d_G(v,s_1),\dots,d_G(v,s_r)\msr$;
    \item the \emph{anonymity set} of a user $u$ with respect to $S$ to be $\Tilde{S}_{[u]} = \{v \in V(G) | m(u | S) = m(v |S)\}$; and
    \item a $k$-\emph{multiset antiresolving set} ($k$-\emph{MARS} for short) to be a set $S\subset V(G)$ satisfying that $k$ is the largest integer such that for all $u\in V(G)\setminus S$ it holds that $|\Tilde{S}_{[u]}|\geq k$.

\end{itemize}
A graph is $(k, \ell)$-multiset anonymous $($or satisfies $(k, \ell)$-multiset anonymity$)$ if every subset of vertices $S$ with $|S| \leq \ell$ is a $k'$-MARS with $k' \leq k$.

\end{definition}

As an example for the previous concepts, consider the hypercube graph $Q_3$ drawn in Figure \ref{fig:Q_3}. Notice that any vertex of $Q_3$ is a $1$-MARS. Also, any two vertices at distance at most $2$ in $Q_3$ form a $2$-MARS, while two vertices at distance $3$ in $Q_3$ form a $6$-MARS. In addition, any other set of vertices of any cardinality always forms a $1$-MARS. These arguments allow to claim that $Q_3$ meets $(1,1)$-multiset anonymity, since $k=1$ is the smallest integer such that $Q_3$ contains a $1$-MARS of cardinality $1$. In practice this means that, there exist vertices in $Q_3$ that by themselves can uniquely re-identify other vertices in the graph. This represents the lowest privacy threshold a graph can offer.

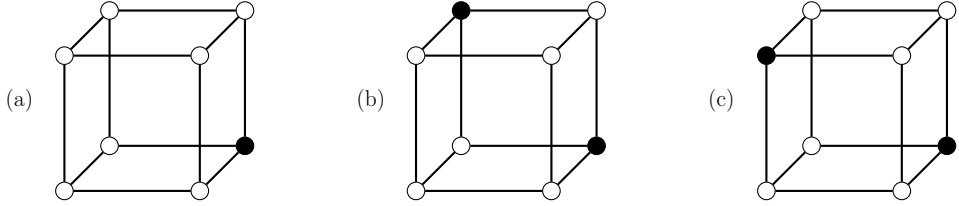
\begin{figure}[ht]
\centering
\begin{tikzpicture}[scale=0.6, transform shape]
\node [draw, shape=circle] (000) at  (0,0) {};
\node [draw, shape=circle] (001) at  (0,3) {};
\node [draw, shape=circle] (010) at  (3,0) {};
\node [draw, shape=circle] (011) at  (3,3) {};

\node [draw, shape=circle] (100) at  (1,1) {};
\node [draw, shape=circle] (101) at  (1,4) {};
\node [draw, shape=circle,fill=black] (110) at  (4,1) {};
\node [draw, shape=circle] (111) at  (4,4) {};

\draw[thick](000)--(001)--(011)--(010)--(000);
\draw[thick](100)--(101)--(111)--(110)--(100);
\draw[thick](000)--(100);
\draw[thick](001)--(101);
\draw[thick](011)--(111);
\draw[thick](010)--(110);
\draw (-1,2) node {\large(a)};
\end{tikzpicture}
\hspace{1cm}
\begin{tikzpicture}[scale=0.6, transform shape]
\node [draw, shape=circle] (000) at  (0,0) {};
\node [draw, shape=circle] (001) at  (0,3) {};
\node [draw, shape=circle] (010) at  (3,0) {};
\node [draw, shape=circle] (011) at  (3,3) {};

\node [draw, shape=circle] (100) at  (1,1) {};
\node [draw, shape=circle,fill=black] (101) at  (1,4) {};
\node [draw, shape=circle,fill=black] (110) at  (4,1) {};
\node [draw, shape=circle] (111) at  (4,4) {};

\draw[thick](000)--(001)--(011)--(010)--(000);
\draw[thick](100)--(101)--(111)--(110)--(100);
\draw[thick](000)--(100);
\draw[thick](001)--(101);
\draw[thick](011)--(111);
\draw[thick](010)--(110);
\draw (-1,2) node {\large(b)};
\end{tikzpicture}
\hspace{1cm}
\begin{tikzpicture}[scale=0.6, transform shape]
\node [draw, shape=circle] (000) at  (0,0) {};
\node [draw, shape=circle,fill=black] (001) at  (0,3) {};
\node [draw, shape=circle] (010) at  (3,0) {};
\node [draw, shape=circle] (011) at  (3,3) {};

\node [draw, shape=circle] (100) at  (1,1) {};
\node [draw, shape=circle] (101) at  (1,4) {};
\node [draw, shape=circle,fill=black] (110) at  (4,1) {};
\node [draw, shape=circle] (111) at  (4,4) {};

\draw[thick](000)--(001)--(011)--(010)--(000);
\draw[thick](100)--(101)--(111)--(110)--(100);
\draw[thick](000)--(100);
\draw[thick](001)--(101);
\draw[thick](011)--(111);
\draw[thick](010)--(110);
\draw (-1,2) node {\large(c)};
\end{tikzpicture}
\caption{Bold vertices form a $1$-MARS in (a); a $2$-MARS in (b); and a $6$-MARS in (c).}\label{fig:Q_3}
\end{figure}

\noindent \emph{Contributions.} Given the obvious evolution from $k$-antiresolving sets to $k$-MARS, our first contribution is the study of their relation. Concretely, we study  how existing results on the $k$-metric antidimension stands in the multiset version. We study the largest value of $k$, denoted $\kappa(G)$, such that there exists a $k$-multiset antiresolving in a graph $G$. This parameter is relevant, as it bounds the maximum level of privacy that a graph can offer, when there is no limit on the number of attacker nodes.
We study the $k$-multiset antidimension of various graphs families, such as trees, wheels, and bipartite graphs. The $k$-multiset antidimension gives the minimum $\ell$, i.e. the minimum cost of the attack in terms of number of attacker nodes, for a graph to be $(k, \ell)$-multiset anonymous.  Lastly, we provide an integer linear programming formulation to determine the privacy of a social graph measured by means of the $(k, \ell)$-multiset anonymity privacy model. We implemented our formulation, and tested it on benchmark instances on general graphs, both sparse and dense, with up to 100 vertices which were already used in \cite{Fernandez2023}, and with two cycle instances with 37 and 40 vertices, respectively. The obtained results support our theoretical findings and highlight the difficulty in finding proven optimal solutions, except for instances where any smallest $k$-MARS has cardinality one, especially for sparse graphs. The effect of the parameter $k$ on the difficulty of solving the instances seems to depend on the density of the input graph: sparse instances tend to be less demanding for smaller values of $k$, whereas dense graphs show the opposite behavior.

\section{Related work and preliminaries}

In this section, we review in chronological order
existing resolvability notions that preceded $k$-multiset antiresolving sets. This helps to further place our work in context. We also introduce necessary notation and terminology for the remaining sections.
Lastly, we provide a preliminary analysis on the relation between $k$-antiresolving sets and $k$-multiset antiresolving sets, which serves as a warm-up for the theoretical results that follow.

\subsection{The path towards $k$-multiset anonymity}


The $k$-multiset antiresolving sets are related to various resolvability notions on graphs, such as the metric dimension, the multiset-metric dimension and the $k$-metric antidimension. Next, we review these graphs parameters in chronological order.

\noindent \textbf{The metric dimension:}  A given set of vertices $S\subset V(G)$ is a \textit{resolving set} for a connected graph $G$ if any two vertices $x,y\in V(G)$ satisfy that $r(x|S)\ne r(y|S)$. This is traduced into the fact that, for such vertices $x,y$ there exists a vertex $s\in S$ such $d_G(x,s)\ne d_G(y,s)$. A resolving set of the smallest possible cardinality is called a \textit{metric basis} for $G$, and the cardinality of a metric basis represents the \textit{metric dimension} of $G$, denoted by $\dim(G)$. These notions were first (and independently) presented in \cite{Harary1976,Slater1975}, and they are very well studied nowadays.

Among the main results, it is known that computing the metric dimension of a graph is NP-hard, even when restricted to some specific graph classes like for instance planar graphs (see \cite{diaz2017} for the planar case). There are also several contributions on finding the metric dimension of a huge number of graph classes, as well as, a large number of possible applications of this topic in some related areas. In order to not give a large number of references, to see more information on this parameter, applications and combinatorial or computational results, we suggest the two recent surveys \cite{kuziak2021,tillquist2023}.

\noindent \textbf{The multiset dimension:} The metric representation of a vertex $v$ with respect to a set of vertices $S$ has traditionally assumed an order on $S$. That assumption was challenged in 2017 by Simanjuntak, Vetrík, and Mulia, who introduced the notion of multiset representation \cite{simanjuntak2019multiset} by looking at the multiset of distances rather than at the vector of distances.
They observed that the multiset representation with respect to $S$, unlike the standard metric representation, is not unique for each vertex in $S$.
This suggests that a multiset resolving set may not always exist, which is indeed the case for the complete graph. Gil-Pons et. al.  \cite{GILPONS2019124612} later proposed an alternative formulation, called the outer multiset dimension, where vertices internal to $S$ are excluded from the distinguishability requirement. They proved that computing the outer multiset dimension is NP-hard and studied the outer multiset dimension for some graph families. Some other recent results on the (outer) multiset dimension of graphs are for instance \cite{hakanen2024,klavzar-outermult}.

\noindent \textbf{The $k$-metric antidimension:}  Given a graph $G$ and an ordered set of vertices $S=\{v_1,\dots,v_r\}\subset V(G)$, the \textit{metric representation} of a vertex $x\in V(G)$ with respect to $S$ is the vector $r(x|S)=(d_G(x,v_1),\dots,d_G(x,v_r))$. A set $S\subset V(G)$ is a $k$-\textit{antiresolving set} of $G$ if $k$ is the largest integer such that for every vertex $v \in V(G)\setminus S$ there exist other $k-1$ vertices having the same metric representation with respect to $S$ as the vertex $v$. Also, the $k$-\textit{metric antidimension} of $G$ represents the cardinality of a smallest $k$-antiresolving set in $G$, and it is denoted by $\adim_k(G)$. Moreover, by $\kappa'(G)$ we represent the largest integer $k$ such that $G$ contains a $k$-antiresolving set, and say that $G$ is $\kappa'(G)$-\textit{metric antidimensional}. These concepts were first presented in \cite{TRUJILLORASUA2016403}.

The earliest preliminary results on the $k$-metric antidimension were given in \cite{TRUJILLORASUA2016403}, particularly on paths, cycles, complete bipartite graphs and trees. The same authors characterized in \cite{Trujillo2016} the trees and unicyclic graphs that do not contain a $k$-antiresolving set with $k > 1$. Such property is considered a worst-case scenario as far as privacy is concerned. Jozef et. al. \cite{Kratica2019KmetricAO}  and Tang et. al. \cite{Tang2021}
have also studied the $k$-metric antidimension. The former focused on some generalized Petersen graphs; the latter on four families of wheel-related social graphs, namely, Jahangir graphs, helm graphs, flower graphs, and sunflower graphs.

The computational complexity of the $k$-metric antidimension problem (and its variants) has been studied independently in \cite{Zhang2017} and
\cite{CHATTERJEE201953}, both proving that computing the $k$-metric antidimension is NP-hard. Consequently, a few heuristics have been   proposed to calculate the $k$-metric antidimension. For example, DasGupta et. al. \cite{DASGUPTA201987} studied the anonymity of eight real-life social graphs, and various synthetic graphs, by looking into the smallest $k$-antiresolving sets. Fern\'andez et. al. \cite{Fernandez2023} calculated via integer programming formulations the $k$-metric antidimension of  cylinders, toruses, 2-dimensional Hamming graphs, and random graphs. Their results indicate that only the 2-dimensional Hamming graphs and some general random dense graphs offer good privacy properties. Trujillo-Rasua and Yero in \cite{TRUJILLORASUA2016403} introduced a true-biased algorithm for determining whether a graph is $(k, \ell)$-anonymous. Their algorithm has high accuracy, but it is limited to graphs of small size.

\noindent \textbf{The $k$-multiset antidimension:}
The type of metric dimension related parameter we study in this article is the $k$-\emph{multiset antidimension}, denoted $\adimms_k(G)$, and defined as the cardinality of a smallest $k$-MARS of $G$.
In addition, a $k$-\emph{multiset antiresolving basis} ($k$-\emph{MARB}) is a $k$-MARS of cardinality $\adimms_k(G)$.

It is natural to think that there are no $k$-MARS for any integer $k$ in a graph $G$. In concordance with this, from now on, we say that $\kappa(G)$ is the largest integer $k$ such that $G$ contains a $k$-MARS, and shall also say that $G$ is a $\kappa(G)$-\textit{multiset antidimensional graph}. In consequence, before studying the $k$-multiset antidimension of a graph $G$ (for suitable values of $k$), one needs to compute the value $\kappa(G)$ for such $G$. We may observe also that, even in the case that a given graph $G$ will be $\kappa(G)$-multiset antidimensional graph, which means that $G$ contains a $\kappa(G)$-MARS, this does not mean that for every integer $k\in  \{1,\dots,\kappa(G)\}$ the graph $G$ contains a $k$-MARS. Indeed, as will show in our exposition, there are graphs $G$ and values $k'$ with $1<k'<\kappa(G)$ such that $G$ does not contain $k'$-MARS. In such situation, we shall assume that $\adimms_{k'}(G)=\infty$.

\subsection{Notation and terminology}

For an integer $n\ge 1$, we shall write $[n]=\{1,\dots, n\}$. The \textit{eccentricity} of a vertex $v\in V(G)$ is the largest possible distance between $v$ and any other vertex $x\in V(G)$. The \textit{diameter} $D(G)$ of $G$ is the largest possible eccentricity of a vertex of $G$, while the \textit{radius} $r(G)$ is the smallest possible eccentricity of any vertex of $G$. Notice that the \textit{center} of a graph is the set of vertices having eccentricity equal to the radius of the graph. For a vertex $v\in V(G)$, by $N_G(v)$ we represent the set of neighbors of $v$.

In addition, the following perspective on the $k$-MARS is very useful. Given a graph $G$ and a set $S\subset V(G)$, we define the following equivalence relation $\mathcal{R}_S$. Two vertices $x,y\in V(G)\setminus S$ are related by $\mathcal{R}_S$ if $m(x|S)=m(y|S)$ ($x,y$ have the same multiset representation with respect to $S$).
Now, for a given set $S\subset V(G)$, we consider $\mathcal{Z}_S=\{Z^1,\dots,Z^r\}$, for some $r\ge 1$, as the set of equivalence classes defined by $\mathcal{R}_S$.
With this terminology in mind, it is readily observed that any vertex set $S\subseteq V(G)$ is a $k$-MARS of $G$ with $k=\min\{|Z^i|\,:\,Z^i\in \mathcal{Z}_S\}$.

The terminology above shall be used throughout our whole exposition while presenting some arguments in the proofs.

\subsection{On the relation between $k$-antiresolving sets and $k$-MARS}

If we consider the case $k=1$, then there is no difference between the notions of vectors and multisets with respect to a set of cardinality one. This means that $1$-MARS, and $1$-multiset antidimension are exactly the same as $1$-antiresolving sets, and $1$-metric antidimension, respectively. Thus, in our investigation, we are mainly focused on the cases $k\ge 2$.

An initial basic result that we observe relates $\adimms_k(G)$ and $\adim_k(G)$. This is based on the following fact. If two vertices have the same metric representation, then they have the same multiset representation.

\begin{remark}\label{rem:rm-bounds}
If $S$ is $k$-antiresolving set of a graph $G$, then $S$ is a $k'$-MARS for some $k'\ge k$.
\end{remark}

\begin{corollary}\label{cor:adim-adimms}
If $\adim_k(G)=t$ for some positive integer $k$, then there is an integer $k'\ge k$ for which $\adimms_{k'}(G)\le t$.
\end{corollary}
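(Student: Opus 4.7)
The proof is essentially a one-line unpacking of Remark \ref{rem:rm-bounds}, so the plan is to stitch together three observations in order. First, I would invoke the definition of $\adim_k(G)$: the assumption $\adim_k(G)=t$ means there exists a $k$-antiresolving set $S\subseteq V(G)$ with $|S|=t$. Second, I would apply Remark \ref{rem:rm-bounds} directly to $S$, which yields an integer $k'\ge k$ such that $S$ is itself a $k'$-MARS in $G$.

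Third, I would close the argument by recalling that $\adimms_{k'}(G)$ is defined as the cardinality of a smallest $k'$-MARS of $G$ (when one exists). Since $S$ is a concrete $k'$-MARS of size $t$, we obtain $\adimms_{k'}(G)\le |S|=t$, which is exactly the claimed inequality.

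There is no real obstacle here: the corollary is a pure bookkeeping consequence of the remark together with the definitions of the two antidimension parameters. The only tiny subtlety worth flagging in the write-up is that the integer $k'$ produced by the remark may genuinely exceed $k$ (and not equal $k$), which is why the statement quantifies over $k'\ge k$ rather than asserting $\adimms_k(G)\le t$; this is consistent with the earlier observation in the paper that a graph need not contain a $k$-MARS for every $k\in\{1,\dots,\kappa(G)\}$, and with the convention $\adimms_{k'}(G)=\infty$ when no $k'$-MARS exists — a convention that is not violated here precisely because $S$ exhibits such a set.
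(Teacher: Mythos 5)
Your proof is correct and is exactly the argument the paper intends: the corollary follows by taking a $k$-antiresolving set $S$ of cardinality $t$, applying Remark \ref{rem:rm-bounds} to conclude $S$ is a $k'$-MARS for some $k'\ge k$, and bounding $\adimms_{k'}(G)\le |S|=t$. Your remark about why the statement quantifies over $k'\ge k$ rather than fixing $k'=k$ is also consistent with the paper's discussion.
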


For instance, by taking $k=4$ and $k'=8$ in the graph $G^*$ of Figure \ref{figure-G-*}, it is satisfied that $\adim_4(G^*)=2$ and $\adimms_{8}(G^*)=2$.

\begin{figure}[ht]
\centering
\begin{tikzpicture}[scale=.5, transform shape]
\node [draw, shape=circle, fill=black] (a) at  (-6,0) {};
\node [draw, shape=circle, fill=black] (c) at  (6,0) {};

\node [draw, shape=circle] (b1) at  (-2,-2) {};
\node [draw, shape=circle] (b2) at  (-2,-1) {};
\node [draw, shape=circle] (b3) at  (-2,1) {};
\node [draw, shape=circle] (b4) at  (-2,2) {};

\node [draw, shape=circle] (b11) at  (2,-2) {};
\node [draw, shape=circle] (b21) at  (2,-1) {};
\node [draw, shape=circle] (b31) at  (2,1) {};
\node [draw, shape=circle] (b41) at  (2,2) {};

\draw(b1)--(a)--(b2);
\draw(b3)--(a)--(b4);
\draw(b11)--(c)--(b21);
\draw(b31)--(c)--(b41);
\draw(b1)--(b11);
\draw(b2)--(b21);
\draw(b3)--(b31);
\draw(b4)--(b41);

\end{tikzpicture}
\caption{The graph $G^*$ with $\adim_4(G^*)=2$ and $\adimms_{8}(G^*)=2$.}
\label{figure-G-*}
\end{figure}

On the other hand, it was noted in \cite{TRUJILLORASUA2016403}, that for any graph $G$ of maximum degree $\Delta$ it holds that $\kappa'(G)\le \Delta$. One could think that an analogous situation could occur for $\kappa(G)$. However, this is far from the reality. Consider for instance the graph $G^*$ given in Figure \ref{figure-G-*}. We can easily note that $G^*$ is $\Delta$-metric antidimensional ($\kappa'(G^*)=\Delta$), where the bold vertices form a standard $\Delta$-antiresolving set, while the same set of bold vertices forms a $(2\Delta)$-MARS, \emph{i.e.}, $\kappa(G^*)\ge 2\Delta$ (in this specific case, we note that indeed $\kappa(G^*)=2\Delta$).

Moreover, we might notice that there is in general no monotony in the value of $\adimms_{k}(G)$ with respect to $k$. Namely, it cannot be claimed as a general rule, that for any suitable $k$, $\adimms_{k-1}(G)\le \adimms_{k}(G)$ or $\adimms_{k-1}(G)\ge \adimms_{k}(G)$. It is even possible to see that, if a graph $G$ is $\kappa(G)$-multiset antidimensional, then, as already mentioned, there are not necessarily $k$-multiset antiresolving sets for some $1\le k\le \kappa(G)$.

\section{Combinatorial properties of the $k$-multiset antidimension}

Since the $k$-multiset antidimension is the theoretical basis of the $(k,\ell)$-multiset anonymity measure, it is then natural that mathematical properties of it are desirable. We first consider the problem of finding $\kappa(G)$ for a graph $G$.

\subsection{Finding the value $\kappa(G)$}

Straightforward bounds for $\kappa(G)$ for a graph of order $n$ are the following ones.

\begin{proposition}
\label{prop:trivial-bounds}
If $G$ is a graph of order $n$, then $1\le \kappa(G)\le n-1$. Moreover,
$\kappa(G)= n-1$ if and only if $G$ has a vertex of degree $n-1$.
\end{proposition}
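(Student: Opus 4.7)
The plan is to treat the two inequalities separately, and then handle each direction of the characterization.

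For the lower bound, the strategy is to exhibit a $k$-MARS with $k \geq 1$ in every (connected) graph $G$ on $n \geq 2$ vertices. I would simply pick any vertex $v$ and take $S = \{v\}$. Every other vertex $u$ has multiset representation $m(u|S) = \msl d_G(v,u) \msr$, so the equivalence classes of $\mathcal{R}_S$ partition $V(G) \setminus \{v\}$ according to distance to $v$. Since every class is nonempty, $S$ is a $k$-MARS with $k \geq 1$, giving $\kappa(G) \geq 1$.

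For the upper bound, I would argue that for any $k$-MARS $S$ in $G$, each equivalence class of $\mathcal{R}_S$ is contained in $V(G) \setminus S$, and in particular $k \leq |V(G) \setminus S| = n - |S| \leq n - 1$, since $S \neq \emptyset$ by definition. Hence $\kappa(G) \leq n-1$.

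For the reverse implication ($\Leftarrow$) of the characterization, assume $G$ has a vertex $v$ of degree $n-1$. Then $d_G(v,u) = 1$ for every $u \in V(G) \setminus \{v\}$, so the multiset representation of every such $u$ with respect to $S=\{v\}$ is $\msl 1 \msr$. There is therefore a single equivalence class of size $n-1$, meaning that $\{v\}$ is an $(n-1)$-MARS, and combined with the upper bound we conclude $\kappa(G) = n-1$.

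For the forward implication ($\Rightarrow$), suppose $\kappa(G) = n-1$ and let $S$ witness this. The bound $k \leq n - |S|$ forces $|S| = 1$, say $S = \{v\}$. Since $S$ is an $(n-1)$-MARS, the unique equivalence class of $\mathcal{R}_S$ must contain all $n-1$ vertices of $V(G) \setminus \{v\}$, so every vertex distinct from $v$ lies at the same distance $d$ from $v$. Because $v$ must have at least one neighbor in a connected graph of order at least $2$, that common distance is $d=1$, forcing $v$ to be adjacent to every other vertex, i.e., $\deg(v) = n-1$. The mildly delicate point, and the only real content of the argument, is this last step: ruling out $d \geq 2$ via connectivity of $G$, which is the standing hypothesis for the paper's distance-based parameters.
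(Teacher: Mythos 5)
Your proposal is correct and follows essentially the same route as the paper: the bounds come from the trivial observations that equivalence classes are nonempty and live inside $V(G)\setminus S$, and the characterization is obtained by noting that an $(n-1)$-MARS forces $|S|=1$ with all remaining vertices equidistant from the single vertex of $S$, which by connectivity means adjacent to it. You merely spell out the steps the paper dismisses as ``straightforward'' and ``clear''.
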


\begin{proof}
The bounds are straightforward. Now, assume $G$ is $(n-1)$-multiset antidimensional. Thus, there is a set $S$ which is an $(n-1)$-MARS, which means $|S|=1$ and that every vertex not in $S$ has the same distance to the unique vertex of $S$. Clearly, this can be only possible whether such vertex in $S$ is adjacent to any other vertex not in $S$. If $G$ has a vertex of degree $n-1$, then such vertex is clearly a $(n-1)$-MARS, which completes the proof of the second statement.
\qed\end{proof}

With respect to the lower bound above, if $G$ is the unique connected graph of order $n=2$, then it is clear that $\kappa(G)=1$, and thus it is $1$-multiset antidimensional. Now, if we consider any graph $G$ with at least three vertices, then it might be true that $\kappa(G)\ge 2$. However, we have not been able to prove this fact in general, but we support this statement with several non-trivial graphs classes that are next presented.

\begin{theorem}
\label{th:trees-kappa}
For any tree $T$ with at least three vertices, $\kappa(T)\ge 2$.
\end{theorem}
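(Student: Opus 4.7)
The plan is to exhibit a $2$-MARS explicitly, constructed from the center of $T$. Recall that the center of a tree is either a single vertex or an edge, according to whether the diameter $D$ is even or odd. I split on this, aiming in each case to choose $S$ so that every vertex outside $S$ shares its multiset representation with at least one other vertex.

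If $D$ is even with center vertex $c$, set $S = \{c\}$. For each $v \neq c$ we have $m(v\,|\,S) = \msl d_T(v,c)\msr$, so the $\mathcal{R}_S$-classes are exactly the distance levels $L_i = \{x \in V(T) : d_T(x,c) = i\}$ for $i \in \{1, \dots, r\}$, where $r = D/2$. Hence I must check $|L_i| \geq 2$ for every nonempty $L_i$. The decisive structural fact is that the unique center vertex $c$ has at least two branches of $T-c$ of full depth $r$: otherwise, the neighbor of $c$ in the only deep branch would itself have eccentricity at most $r$, contradicting that $c$ is the sole center. Given two full-depth branches, the paths from $c$ to the deepest vertex inside each of them meet every level $i\in\{1,\dots,r\}$, so $|L_i|\geq 2$.

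If $D$ is odd, then $D\ge 3$ (since $D=1$ forces $n=2$), and the center is an edge $(c_1,c_2)$; set $S = \{c_1,c_2\}$. Removing this edge splits $T$ into subtrees $T_1\ni c_1$ and $T_2\ni c_2$, and a brief calculation using that both $c_1$ and $c_2$ have eccentricity $r := (D+1)/2$ shows that the eccentricity of $c_j$ inside $T_j$ is exactly $r-1$ for $j=1,2$. For any $v\in T_1\setminus\{c_1\}$ one has $d_T(v,c_2) = d_T(v,c_1)+1$, hence $m(v\,|\,S) = \msl i, i+1\msr$ with $i = d_T(v,c_1)$; the analogous statement holds on the $T_2$ side. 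Two vertices outside $S$ are therefore $\mathcal{R}_S$-equivalent iff they sit at the same distance from their own side's center vertex, so the equivalence classes are indexed by $i\in\{1,\dots,r-1\}$. Each side contributes at least one vertex at every such level, namely along a path from $c_j$ to a depth-$(r-1)$ vertex, so every class has size $\geq 2$ and $S$ is a $2$-MARS.

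The main obstacle is the structural lemma about tree centers invoked in both cases: a unique center vertex forces at least two branches of depth $r$, and a center edge forces both sides of the split to have depth exactly $r-1$. These facts are folklore, but they need to be stated precisely and applied carefully. The small-diameter corner cases $D\in\{2,3\}$ (stars and $P_4$) should also be verified to confirm the construction yields no singleton class.
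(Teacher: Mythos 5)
Your proof is correct and follows essentially the same route as the paper's: take $S$ to be the center of $T$ (a single vertex or an adjacent pair, according to the parity of the diameter) and show every $\mathcal{R}_S$-class meets at least two vertices. You even supply the justification for why each distance level from a unique center vertex has size at least two (the two full-depth branches), a fact the paper merely asserts.
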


\begin{proof}
We consider the center of the tree $T$, i.e., the set of vertices of $T$ having the eccentricity equal to the radius of $T$. Notice that the center of any tree is either a vertex or two adjacent vertices.

Assume first that $T$ the center of $T$ is formed by a unique vertex $r$. Let $D_i(r)$ be the set of vertices at distance $i$ from $r$ with $i\in [r(G)]$. Since $r$ represents the center of $T$, there are at least two vertices in each $D_i(r)$ for every $i\in [r(G)]$. Thus, it can be readily observed that the set $S=\{r\}$ is a $k'$-MARS with $k'=\min\{|D_i(r)|\,:\,i\in [r(G)]\}$. Since  $|D_i(r)|\ge 2$ for every $i\in [r(G)]$, we deduce that $\kappa(T)\ge 2$.

Assume next that $T$ has a center formed by two adjacent vertices $r,s$. This means that the set of vertices of $T$ can be partitioned into disjoint sets $X_i(r)\subsetneq D_i(r)$ and $X_i(s)\subsetneq D_i(s)$ with $i\in [r(G)]$ such that if $x\in X_j(r)$, then $d_T(x,s)=j+1=d_T(x,r)+1$, and if $x\in X_j(s)$, then $d_T(x,r)=j+1=d_T(x,s)+1$.

Note that each of the sets $X_i(r)$ and $X_i(s)$ with $i\in [r(G)]$ is not empty, based on the fact that $r,s$ form the center of $T$. Let $S=\{r,s\}$. Based on the definitions of the sets $X_i(r)$ and $X_i(s)$, if $x\in X_j(r)$, then $m(x|S)=\msl j,j+1\msr$, while if $x'\in X_j(s)$, then $m(x'|S)=\msl j,j+1\msr$. Thus, both vertices $x,x'$ have the same multiset representation with respect to $S$, and so, $S$ is a $k'$-MARS with $k'=\min\{|X_i(r)|+|X_i(s)|\,:\,i\in [r(G)]\}$. Since $|X_i(r)|\ge 1$ and $|X_i(s)|\ge 1$ for every $i\in [r(G)]$, we deduce that $\kappa(T)\ge 2$ as well.
\qed\end{proof}

The result above arises a question concerning characterizing the class of trees $T$ satisfying that $\kappa(T)=2$. We now continue with other basic families of graphs, which in addition, supports our statement about the lower bound for $\kappa(G)$.

\begin{proposition}\label{graph-multi-antidim}
The following statements holds for any integers $r,t$, with $r\ge t$, and $n\ge 2$.
\begin{itemize}
  \item[{\rm (i)}] If $t=1$, then $\kappa(K_{r,t})=r+t-1$, and otherwise $\kappa(K_{r,t})=r+t-2$.
  \item[{\rm (ii)}] $\kappa(P_n)=2$.
\end{itemize}
\end{proposition}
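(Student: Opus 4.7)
\medskip

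\noindent\textbf{Proof plan.} I split into the two items and in each case argue the upper and lower bounds on $\kappa$ separately.

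\smallskip

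For (i), the case $t=1$ is immediate: $K_{r,1}$ is a star whose center has degree $r=n-1$, so Proposition~\ref{prop:trivial-bounds} yields $\kappa(K_{r,1})=n-1=r+t-1$. For the case $t\ge 2$, I first note that the maximum degree of $K_{r,t}$ is $r<r+t-1=n-1$, hence (again by Proposition~\ref{prop:trivial-bounds}) $\kappa(K_{r,t})\le n-2=r+t-2$. For the matching lower bound I would exhibit a concrete $(r+t-2)$-MARS by taking $S=\{a,b\}$ with $a$ in the part of size $r$ and $b$ in the part of size $t$; a one-line distance computation shows that every $x\in V(K_{r,t})\setminus S$ satisfies $m(x|S)=\msl 1,2\msr$, so the unique equivalence class outside $S$ has size $(r-1)+(t-1)=r+t-2$, and $S$ is an $(r+t-2)$-MARS.

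\smallskip

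For (ii), I would implicitly restrict to $n\ge 3$ (for $n=2$, $P_2=K_{1,1}$ has a vertex of degree $n-1$ and is covered by (i)). The lower bound $\kappa(P_n)\ge 2$ is free: paths are trees with at least three vertices, so Theorem~\ref{th:trees-kappa} applies. The content of the statement is therefore the upper bound $\kappa(P_n)\le 2$, which I would obtain from the following structural observation on the real line.

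\smallskip

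Label $V(P_n)=\{v_1,\dots,v_n\}$ so that $d(v_i,v_j)=|i-j|$, fix any $S\subseteq V(P_n)$, write $|S|=s$ and $\sigma=\sum_{v_j\in S} j$. If $v_i$ and $v_{i'}$ lie in the same class of $\mathcal{R}_S$ then their multisets of distances to $S$ agree, so in particular the sums of squares agree:
\begin{equation*}
\sum_{v_j\in S}(i-j)^2=\sum_{v_j\in S}(i'-j)^2.
\end{equation*}
Expanding and simplifying gives $s(i^2-i'^2)=2(i-i')\sigma$, i.e.\ $i+i'=2\sigma/s$ whenever $i\neq i'$. Thus for any $v_i\notin S$ there is \emph{at most one} other index $i'$ that can share its multiset representation, namely $i'=2\sigma/s-i$. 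Consequently every class of $\mathcal{R}_S$ has size at most $2$, so no $k$-MARS with $k\ge 3$ exists and $\kappa(P_n)\le 2$.

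\smallskip

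The only mildly subtle step is the upper bound in~(ii); everything else is either a direct application of Proposition~\ref{prop:trivial-bounds} and Theorem~\ref{th:trees-kappa} or a one-line distance check. The sum-of-squares invariant is the natural tool because the underlying geometry of $P_n$ is one-dimensional, so a multiset of distances to $S$ already determines the first two power sums of $\{i-j:v_j\in S\}$, which in turn pins down $i$ up to reflection through the mean of $S$.
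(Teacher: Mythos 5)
Your proof is correct, and part (ii) takes a genuinely different route from the paper's. Part (i) is essentially the paper's argument: the star case via Proposition~\ref{prop:trivial-bounds}, the upper bound for $t\ge 2$ from the absence of a vertex of degree $n-1$, and the lower bound from the two-vertex set meeting both parts, whose single outside class $\msl 1,2\msr$ you verify explicitly (the paper merely asserts this is ``clearly'' an $(r+t-2)$-MARS). For part (ii), the paper establishes $\kappa(P_n)\le 2$ by a case analysis on whether the leaves of $P_n$ lie in $S$, arguing somewhat informally that the class of a leaf (or of the leftmost vertex outside $S$) can contain at most one further vertex, namely its mirror image. Your second-moment identity replaces this entirely: from equality of the multisets you extract equality of the sums of squared distances, which forces $i+i'=2\sigma/s$ and hence pins down the at most one partner of any vertex by reflection through the mean of $S$. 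This is cleaner, proves the stronger fact that \emph{every} class of $\mathcal{R}_S$ on a path has size at most $2$ for \emph{every} $S$, and would transfer to any graph isometrically embeddable in the integer line; the paper's argument, by contrast, needs no algebra and stays close to the combinatorics of leaves. Your lower bound via Theorem~\ref{th:trees-kappa} is legitimate (the paper instead re-exhibits the central vertex, resp.\ the two leaves, as a $2$-MARS, which is the same construction used inside that theorem). One further point in your favour: your remark that $\kappa(P_2)=1$ is right, and it shows the statement's range ``$n\ge 2$'' is slightly off --- the paper itself observes earlier that the connected graph of order $2$ has $\kappa=1$, so (ii) should be read with $n\ge 3$, exactly as you restrict.
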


\begin{proof}
(i) If $t=1$, then clearly $K_{r,1}$ is a star graph with $r$ leaves that has a vertex of degree equal to the order minus one. Thus, the first conclusion follows from Proposition \ref{prop:trivial-bounds}. If $t>1$, then a set of two vertices formed by one vertex of each bipartition set of $K_{r,t}$ is clearly an $(r+t-2)$-MARS, and there are not $(r+t-1)$-antiresolving sets by using again Proposition \ref{prop:trivial-bounds}. Thus, the second conclusion also follows.

(ii) We observe that, if $n$ is odd, then the vertex of $P_n$ equidistant from both leaves of $P_n$ is a $2$-MARS of $P_n$. Also, if $n$ is even, then the two leaves of $P_n$ form a $2$-MARS as well. Suppose now $S$ is a $k$-MARS of $P_n$ for some $k\ge 3$. Assume $P_n=v_1v_2\dots v_n$ and consider the following situations.
\begin{itemize}
  \item If one leaf, say $v_1$, does not belong to $S$, then any other vertex which has the same multiset as $v_1$ must be a vertex $v_i$ such that $i>j$, for every $v_j\in S$. Hence, we readily note that there could be only one vertex more having the same multiset representation as $v_i$. This means that $S$ is a $2$-MARS, a contradiction.
  \item If both leaves of $P_n$ belong to $S$, then consider the smaller $i$ such that $v_i\notin S$. Now, we observe that any other vertex which has the same multiset as $v_i$ must be a vertex $v_j$ for which $d(v_j,v_n)=d(v_i,v_1)$ and this is a unique vertex. Thus, again $S$ can only be a $2$-MARS, a contradiction.
\end{itemize}
As a consequence of the cases above, we get that $\kappa(P_n)=2$ as we need.
\qed\end{proof}

\subsection{Computing the $k$-multiset antidimension of graphs}\label{subsec:computing}

We center our attention in this section into finding the value of $\adimms_k(G)$ for several classes of graphs $G$. We begin with the case of complete bipartite graphs.

\begin{proposition}
\label{prop:complete-bip}
Let $r,t$ be two positive integers with $r\ge t$.
\begin{itemize}
\item[{\rm (i)}] If $1< k< t$, then $\adimms_k(K_{r,t})=t-k$.
\item[{\rm (ii)}] If $k=t$, then $\adimms_k(K_{r,t})=\left\{\begin{array}{ll}
                                         1, & \mbox{if $r>t$}, \\
                                         r, & \mbox{if $r=t$}.
                                       \end{array}\right.
$
\item[{\rm (iii)}] If $t< k\le r$, then $\adimms_k(K_{r,t})=r+t-k$.
\item[{\rm (iv)}] If $($$r< k\le r+t-2$ is even and $r+t$ is even$)$ or $($$r< k\le r+t-2$ is odd and $r+t$ is odd$)$, then $\adimms_k(K_{r,t})=r+t-k$.
\item[{\rm (v)}] If $($$r< k\le r+t-2$ is even and $r+t$ is odd$)$ or $($$r< k\le r+t-2$ is odd and $r+t$ is even$)$, then $K_{r,t}$ does not contain any $k$-MARS.
\end{itemize}
\end{proposition}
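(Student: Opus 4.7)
Let $A$ and $B$ denote the parts of $K_{r,t}$ with $|A|=r\ge t=|B|$. For $S\subseteq V(K_{r,t})$, set $a=|S\cap A|$ and $b=|S\cap B|$. Since non-trivial distances in $K_{r,t}$ are only $1$ or $2$, every vertex in $A\setminus S$ has multiset representation $\msl 2^{(a)},1^{(b)}\msr$ and every vertex in $B\setminus S$ has multiset representation $\msl 1^{(a)},2^{(b)}\msr$. These two multisets coincide if and only if $a=b$. This is the whole content one needs; the proof reduces to a case analysis on $(a,b)$.

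Using the equivalence-class viewpoint from the preliminaries, and assuming first that $r-a>0$ and $t-b>0$: when $a=b$ the non-$S$ vertices form a single class of size $r+t-2a$, so $S$ is a $(r+t-2a)$-MARS; when $a\neq b$ they split into two classes of sizes $r-a$ and $t-b$, so $S$ is a $\min(r-a,t-b)$-MARS. The boundary configurations $a=r$ and $b=t$ eliminate one part and give $k'=t-b$ and $k'=r-a$ respectively. The plan is to minimise $a+b$ under the constraint that the resulting $k'$ equals the target $k$.

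For (i), with $1<k<t$, the choice $a=0$, $b=t-k$ yields classes of sizes $r$ and $k$, producing a $k$-MARS of size $t-k$. The lower bound is verified by ruling out each regime: the $a=b$ regime forces $|S|=r+t-k>t-k$, and any $a\neq b$ configuration with $t-b=k$ has $|S|\ge b=t-k$. For (ii) with $r>t$, any single vertex $v\in A$ gives classes of sizes $r-1\ge t$ and $t$, hence a $t$-MARS of size $1$; for $r=t$, the choice $S=A$ yields a single class $B$ of size $t$, while any $|S|<t$ forces $k'=2t-2a>t$ in the $a=b$ regime and $k'<t$ in the mixed regime. For (iii) with $t<k\le r$, the construction $b=t$, $a=r-k$ leaves $B\setminus S$ empty and a single class in $A$ of size $k$, giving $|S|=r+t-k$. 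Since $k>t$, the mixed regime yields $k'\le t<k$, the $a=r$ boundary forces $b<0$, and the $a=b$ regime with $a<t$ again forces $|S|\ge r+t-k$.

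For (iv) and (v), with $r<k\le r+t-2$, every mixed or boundary configuration is ruled out: mixed gives $k'\le t<k$, and the boundaries $a=r$ or $b=t$ would require negative $b$ or $a$. Hence only $a=b$ with $a<t$ can produce a $k$-MARS, which forces $a=(r+t-k)/2$ and $|S|=r+t-k$. This $a$ is a non-negative integer with $a<t$ precisely when $r+t$ and $k$ have the same parity, giving conclusion (iv); otherwise no $k$-MARS exists, giving (v). The main obstacle is not any single calculation but the careful bookkeeping of the two boundary regimes ($a=r$, $b=t$) uniformly across all five subcases, since they are the only sources of large $k'$ and must be compared against the $a=b$ construction in every range of $k$.
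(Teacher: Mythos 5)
Your proof is correct and follows essentially the same route as the paper's: both rest on the observation that in $K_{r,t}$ all distances are $1$ or $2$, so the equivalence classes induced by any candidate set $S$ are determined entirely by $a=|S\cap A|$ and $b=|S\cap B|$, the two parts merging into a single class exactly when $a=b$; your $(a,b)$ parametrization simply packages the paper's case-by-case constructions and contradiction arguments more uniformly. The only spot worth tightening is the lower bound in (i), where besides the sub-case $t-b=k$ you should also dispose of $r-a=k$ with $t-b>k$ (there $|S|\ge a=r-k\ge t-k$, immediate from $r\ge t$), but this is a one-line addition rather than a gap.
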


\begin{proof}
From Proposition \ref{graph-multi-antidim} (i) we know that $\kappa(K_{r,t})=r+t-1$ if $t=1$, and that $\kappa(K_{r,t})=r+t-2$ otherwise. Let $U$ and $V$ be the two bipartition sets of $K_{r,t}$
with $|U|=r$ and $|V|=t$. We first assume that $t< k\le r$. Let $A\subseteq U$ with $|A|=k$ and let be $S=(V\cup U)-A$. Notice that if $k=r$, then $A=U$ and so, $S=V$. Since any vertex $v\notin S$ (or equivalently $v\in A$) is adjacent to every vertex of $V$ and it has distance two to every vertex in $U-A$, we have that all the vertices of $A$ have the same multiset representation with respect to $S$. As $|A|=k$, it follows that $S$ is a $k$-MARS and $\adimms_k(K_{r,t})\le r+t-k$. Now, suppose $\adimms_k(K_{r,t})<r+t-k$ and let $S'$ be a $k$-MARS of $K_{r,t}$.

If there exist more than $k$ vertices of $U$ not in $S'$, then for any
vertex $u\in U-S'$ there exist at least $k$ vertices not in $S'$ which,
together with $u$, have the same multiset representation with respect to $S'$. So, $S'$ is not a $k$-MARS, but a $k'$-MARS for some
$k'\ge k+1$, which is a contradiction. Thus, $|U-S'|\le k$. Now, if there exists at least one vertex $v\in V$ which is not in $S'$, then such a vertex must have the same multiset representation with respect to $S'$ as those vertices in $U-S'$. Moreover, it must happen that $|S'\cap U|=|S'\cap V|$, otherwise the vertices in $(U\cup V)-S'$ will not have the same multiset representation. Thus, $k=|(U\cup V)-S'|=r+t-|S'|< r+t-(r+t-k)=k$, which is also not possible.
Therefore, we obtain that $\adimms_k(K_{r,t})=r+t-k$.

Now assume $k=t$. If $r=t$, then clearly $U$, $V$ and half of vertices of $U$ together with half of vertices of $V$ (whether $r$ is an even number) are the only possibilities for a $k$-MARS of $K_{r,t}$. Thus, $\adimms_k(K_{r,t})=r$. On the contrary, if $r>t$, then for any vertex $z\in V$ it holds that the $t=k$ vertices in $U$ have the same multiset representation with respect to $\{z\}$ and for any vertex $w\in V-\{z\}$ there are at least $k-1$ vertices having the same multiset representation as $w$ with respect to $\{z\}$. Thus, $\{z\}$ is a $k$-MARS of $K_{r,t}$ of minimum cardinality, or equivalently, $\adimms_k(K_{r,t})=1$.

We now consider that $1< k\le t$. Let $Y\subseteq V$ with $|Y|=k$ and let $Q=V-Y$. Hence, for any vertex $v\in Y$, there exist exactly $k-1$ vertices, such that all of them, together with $v$, have the same
multiset representation with respect to $Q$. Moreover, for any vertex $u\in U$, there exist at least $k$ vertices having the same
multiset representation as $u$ with respect to $Q$. Thus, $Q$ is a $k$-MARS and $\adimms_k(K_{r,t})\le t-k$.

Now, suppose that $\adimms_k(G)<t-k$ and let $Q'$ be a $k$-MARS in $K_{r,t}$. Hence, there exist more than $k$ vertices of $U$ not in
$Q'$ or there exist more than $k$ vertices of $V$ not in $Q'$. Thus, in any
of both possibilities we obtain that $Q'$ is not $k$-MARS, but a $k'$-MARS for some $k'\ge k+1$, a contradiction. As a consequence, $\adimms_k(K_{r,t})= t-k$.

Finally, assume $r< k\le r+t-1$. Let $A$ be a $k$-MARB for $K_{r,t}$. Since $k>r\ge t$, it must happen $U-A\ne \emptyset$ and $V-A\ne \emptyset$. Thus, in order that vertices in $U-A$ and in $V-A$ will have the same multiset representation, it must happen that $|A\cap U|=|A\cap V|$. This means that $A$ has even cardinality and that all the vertices outside of $A$ have the same multiset representation w.r.t. $A$. Thus, if $k$ and $r+t$ have different parity, then it is not possible for $A$ to be a $k$-MARS. Thus, we may consider $k$ and $r+t$ have the same parity. If $|A|<r+t-k$, then $A$ is a $k'$-MARS for some $k'>k$, which is not possible. Consequently, $\adimms_k(K_{r,t})= |A|\ge r+t-k$. On the other hand, it can be checked that any set of cardinality $r+t-k$ having the same number of vertices in each bipartition set of $K_{r,t}$ is a $k$-MARS, which completes the proof.
\qed\end{proof}

Since any tree $T$ satisfies that $2\le \kappa(T)\le |V(T)|-1$, and indeed both bounds are achieved (paths $P_n$ satisfy that $\kappa(T)=2$ and stars $K_{1,t}$ satisfy that $\kappa(K_{1,t})=t$), it seems that finding the exact value of $\adimms_k(T)$ for any tree $T$ is a challenging problem. We next give some partial results on this direction, and begin with the straightforward case of paths.

\begin{remark}
For any integer $n\ge 2$, $\adimms_1(P_n)=1$ and $\adimms_2(P_n)=\left\{\begin{array}{ll}
                             1, & \mbox{if $n$ is odd,} \\
                             2, & \mbox{if $n$ is even.}
                             \end{array}
  \right.$
\end{remark}

On the other hand, note that stars are indeed complete bipartite graphs having one bipartition set of cardinality one, and such graphs were already studied in Proposition \ref{prop:complete-bip}. From now on we consider trees that are neither paths nor stars. This means, among other things, that the trees in study have maximum degree and diameter at least three.

We observe that any leaf of a tree $T$ is a $1$-MARS and so, $\adimms_1(T)=1$. In this sense, we next center the attention into the case $k\ge 2$, and precisely analyze first when $k=2$.

\begin{lemma}
\label{lem:two-vertices-tree}
Let $T$ be a tree of diameter at least three. Then, any two not adjacent vertices $x,y\in V(T)$ at odd distance form a $k$-MARS for some $k\le 2$.
\end{lemma}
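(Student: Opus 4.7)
The plan is to exhibit a single $\mathcal{R}_S$-class of size exactly $2$, which immediately yields $k\le 2$. Write $S=\{x,y\}$, let $d:=d_T(x,y)$, and note that by hypothesis $d$ is odd and $d\ge 3$. Since $T$ is a tree, the $x$-$y$ geodesic is unique; denote it $x=v_0,v_1,\dots,v_d=y$ and set $u:=v_1$, $w:=v_{d-1}$. Then $u\ne w$ (because $d\ge 3$), both lie in $V(T)\setminus S$, and a direct calculation gives $m(u|S)=\msl 1,\,d-1\msr=\msl d-1,\,1\msr=m(w|S)$, so $u$ and $w$ already sit in a common class of $\mathcal{R}_S$.

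The main step is to rule out any further vertex in this class. Suppose $v\in V(T)\setminus S$ satisfies $m(v|S)=\msl 1,\,d-1\msr$. Then either $d_T(v,x)=1$ and $d_T(v,y)=d-1$, or the two values are swapped. In the first case $v$ is a neighbor of $x$; if $v$ did not lie on the $x$-$y$ geodesic, then, because $T$ is a tree, the unique $v$-$y$ path would pass through $x$, forcing $d_T(v,y)=1+d$, which contradicts $d_T(v,y)=d-1$. Hence $v$ must be the neighbor of $x$ lying on the geodesic, i.e.\ $v=u$. The symmetric subcase forces $v=w$, so the class of $u$ under $\mathcal{R}_S$ is exactly $\{u,w\}$.

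Using the characterization $k=\min\{|Z^i|:Z^i\in\mathcal{Z}_S\}$ recalled in the preliminaries, we conclude that $S=\{x,y\}$ is a $k$-MARS with $k\le 2$. The only substantive ingredient is the tree-uniqueness argument that pins a candidate vertex onto the $x$-$y$ geodesic; the rest is bookkeeping, and in particular no case distinction on the parity of $d$ beyond $d\ge 3$ is required. The hypothesis that $T$ has diameter at least three is used only implicitly, to guarantee that such a pair $(x,y)$ can exist at all.
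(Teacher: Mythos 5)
Your proof is correct and follows essentially the same route as the paper's: both identify a size-two equivalence class among the internal vertices of the unique $x$--$y$ geodesic and invoke $k=\min\{|Z^i|\}\le 2$. You flesh out, via the bridge/uniqueness argument in a tree, why the class $\{v_1,v_{d-1}\}$ contains nothing else --- a step the paper leaves as ``readily seen'' for all the pairs $\{w_i,w_{r+1-i}\}$ --- and your observation that only this one pair (and hence no parity assumption on $d_T(x,y)$) is needed for the bound $k\le 2$ is a valid minor strengthening rather than a different method.
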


\begin{proof}
Let $S=\{x,y\}$. Since $d_T(x,y)\ge 3$ ($x,y$ are not adjacent) is odd, the shortest $x,y$-path $P=xw_1w_2\cdots w_ry$ (with $r\ge 2$) contains an even number of vertices, i.e., $r$ is even. It can be readily seen that $\{w_1,w_r\}, \{w_2,w_{r-1}\},\dots,\{w_{r/2},w_{r/2+1}\}$ form equivalence classes in the equivalence relation $\mathcal{R}_S$.

Since all of these equivalence classes have cardinality two, we deduce that $S$ can be a $k$-MARS only if $k\le 2$.
\qed\end{proof}

From the result above, if any other equivalence class from the equivalence relation $\mathcal{R}_S$ has cardinality at least two, then clearly such $S$ is a $2$-MARS. If on the contrary, there is at least one equivalence class of cardinality one, then $S$ is a $1$-MARS. This allows to conclude the following result.

\begin{corollary}
For any tree $T$ of diameter at least three, $1\le \adimms_2(T)\le 2$.
\end{corollary}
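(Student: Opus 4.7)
The lower bound $\adimms_2(T)\ge 1$ is trivial, since by definition any $k$-MARS is a nonempty subset of $V(T)$ and hence has cardinality at least one.

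For the upper bound $\adimms_2(T)\le 2$, I would exhibit a $2$-MARS of cardinality at most two. Because $T$ has diameter at least three, I can pick two non-adjacent vertices $x,y\in V(T)$ at odd distance (for example, an endpoint of a diametral path together with a vertex three edges away on that path). By Lemma~\ref{lem:two-vertices-tree}, the set $S=\{x,y\}$ is a $k$-MARS with $k\le 2$, and the proof of that lemma already establishes that the internal vertices of the shortest $x$–$y$-path split into equivalence classes of size exactly two under $\mathcal{R}_S$.

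What remains is to argue that the equivalence classes coming from the vertices outside the $x$–$y$-path also have cardinality at least two. If so, then $S$ itself is a $2$-MARS of size two, immediately giving $\adimms_2(T)\le 2$. Otherwise, some off-path vertex produces a singleton class and $S$ is only a $1$-MARS; in that case, the plan is to replace $S$ by another candidate of size at most two, such as a single vertex near the center of $T$ whose distance layers all contain at least two vertices, or a pair obtained by sliding one endpoint along the diametral path.

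The main obstacle is proving that such a substitution always succeeds, i.e.\ that in every tree of diameter at least three some candidate set of cardinality at most two is a genuine $2$-MARS. This would require a structural case analysis on $T$, distinguished by the parity of the diameter and by whether the center of $T$ is a single vertex or an adjacent pair, following the same two-case pattern used in the proof of Theorem~\ref{th:trees-kappa}.
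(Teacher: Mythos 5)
Your lower bound is fine, and your upper-bound strategy --- apply Lemma~\ref{lem:two-vertices-tree} to a pair of non-adjacent vertices at odd distance --- is exactly the route the paper takes: the corollary is stated right after that lemma and is justified only by the observation that such a pair $S$ is either a $2$-MARS (if every class of $\mathcal{R}_S$ has cardinality at least two) or a $1$-MARS (otherwise). So in terms of approach you have reconstructed the paper's argument.

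The obstacle you flag at the end is, however, a genuine gap, and you are right not to wave it away. Lemma~\ref{lem:two-vertices-tree} only guarantees that the pair is a $k$-MARS for \emph{some} $k\le 2$; if the chosen pair turns out to be a $1$-MARS, no $2$-MARS of cardinality at most two has been exhibited, and under the paper's convention $\adimms_2(T)$ could a priori even be $\infty$. The concern is not vacuous: in $P_5=v_1v_2v_3v_4v_5$ the pair $\{v_1,v_4\}$ is non-adjacent at odd distance, yet $v_5$ forms a singleton class, so this pair is only a $1$-MARS (here the corollary is rescued by the center $v_3$, both of whose distance layers have size two, so $\{v_3\}$ is a $2$-MARS). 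The paper itself does not close this case either --- it passes from the dichotomy ``$1$-MARS or $2$-MARS'' directly to the corollary --- so your proposal is no less complete than the published justification; but as a standalone proof it is unfinished, since your ``substitution'' step is only a plan. The natural way to make it rigorous is the one you hint at: if some vertex $x$ has a distance layer $D_i(x)$ of size exactly two, then $\{x\}$ is a $2$-MARS (this is the content of Proposition~\ref{prop:trees-adimms-1}); otherwise one must argue that some pair realizes a minimum class size of exactly two, which is the structural case analysis you correctly identify as the missing piece.
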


The result above immediately suggest characterizing the trees $T$ for which either $\adimms_2(T)=1$ or $\adimms_2(T)=2$. We next focus on this fact. To this end, we need the following terminology and notation. Given a vertex $x\in V(T)$ and an integer $i\in [\epsilon(x)]$, by $D_i(x)$ we represent the set of vertices at distance $i$ from $x$.

\begin{proposition}
\label{prop:trees-adimms-1}
Let $T$ be a tree. Then $\adimms_2(T)=1$ if and only if there is a vertex $x\in V(T)$ and an integer $i\in [\epsilon(x)]$ such that $|D_i(x)|=2$.
\end{proposition}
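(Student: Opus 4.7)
My plan is to reduce the question about $\adimms_2(T)$ to a statement about distance-level sets around a single vertex, and then prove each direction of the equivalence separately.

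The key observation is that for any singleton $S=\{y\}$, the multiset $m(u\mid S)=\msl d_T(u,y)\msr$ depends only on $d_T(u,y)$, so the equivalence classes of $\mathcal{R}_{\{y\}}$ on $V(T)\setminus\{y\}$ are precisely the distance levels $D_1(y),\dots,D_{\epsilon(y)}(y)$. Consequently $\{y\}$ is a $k$-MARS with $k=\min_{j\in[\epsilon(y)]}|D_j(y)|$, and in particular $\{y\}$ is a $2$-MARS if and only if every distance level from $y$ has at least two vertices and at least one has size exactly~$2$. This observation is the workhorse of both directions.

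With it the forward direction is immediate. If $\adimms_2(T)=1$ then some singleton $\{y\}$ is a $2$-MARS, so the index that realises the minimum satisfies $|D_i(y)|=2$; taking $x=y$ provides the required pair $(x,i)$.

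For the backward direction, given $x\in V(T)$ and $i\in[\epsilon(x)]$ with $|D_i(x)|=2$, I would aim to produce a vertex $y$ such that $\{y\}$ is a $2$-MARS. If $y=x$ already has every $|D_j(x)|\ge 2$ we are done, so the substantive case is when some other distance level $D_j(x)$ has cardinality $1$, which forces a ``thin'' branch of $T$ through the unique vertex at distance $j$ from $x$. The plan is to shift the candidate root along such a branch step by step and track how the distance profile changes: moving one edge along a branch changes each $|D_j|$ by the number of vertices that enter or leave a shell, and under the standing hypotheses that $T$ is neither a path nor a star (so $T$ has diameter at least $3$ and maximum degree at least $3$), I would argue that this procedure must terminate at a vertex $y$ whose distance levels are all of size at least~$2$ while some level still has size exactly~$2$ (either the original pair in $D_i(x)$ is preserved at the new level, or a new size-two level is created). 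The main obstacle is precisely this rerooting argument: showing that one can always reach a root whose entire distance profile is non-trivial without destroying every size-two level along the way is the technical core of the implication, and is where the structural hypotheses on $T$ (non-path, non-star, diameter $\ge 3$) must be used most carefully.
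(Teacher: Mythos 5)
Your forward direction is correct and coincides with the paper's: for a singleton $S=\{y\}$ the classes of $\mathcal{R}_{\{y\}}$ are exactly the level sets $D_1(y),\dots,D_{\epsilon(y)}(y)$, so a singleton $2$-MARS forces $\min_j|D_j(y)|=2$ and in particular some level of size exactly two.

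For the backward direction you have correctly identified the real difficulty, which the paper's proof dismisses with a ``clearly'': a vertex $x$ with $|D_i(x)|=2$ for some $i$ need not itself be a $2$-MARS, since another level may have size one. However, your proposed repair --- rerooting along a thin branch until all levels have size at least two while a level of size two survives --- is only sketched, and it cannot be completed, because the implication being proved is false. Consider the tree $T$ on $16$ vertices with a root $c$, children $a,b,e,f$ of $c$, three leaf children attached to each of $a$, $b$ and $e$, and two leaf children $f_1,f_2$ attached to $f$; this tree has maximum degree $4$ and diameter $4$, so it meets the standing hypotheses (neither a path nor a star). Here $D_2(f_1)=\{c,f_2\}$ has cardinality two, so the right-hand side of the equivalence holds with $x=f_1$ and $i=2$. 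Yet no singleton is a $2$-MARS: every leaf has $|D_1|=1$; the root has levels of sizes $4$ and $11$; each of $a,b,e$ has levels of sizes $4,3,8$; and $f$ has levels of sizes $3,3,9$. Hence $\adimms_2(T)=2$, contradicting the stated equivalence. The statement should be strengthened to: $\adimms_2(T)=1$ if and only if there is a vertex $x$ with $\min_{i\in[\epsilon(x)]}|D_i(x)|=2$, i.e., every level from $x$ has size at least two and some level has size exactly two; with that condition both directions follow immediately from your opening observation, and no rerooting is needed.
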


\begin{proof}
Clearly, if $x$ satisfies that $|D_i(x)|=2$ for some integer $i\in [\epsilon(x)]$, then $\{x\}$ is a $2$-MARS, and so, $\adimms_2(T)=1$. On the other hand, if $\adimms_2(T)=1$, then there must be a vertex $x'$ such that the equivalence relation $\mathcal{R}_{\{x'\}}$ has an equivalence class of cardinality two. This happens only in the case that there will be a set $D_j(x')$ for some $j\in [\epsilon(x')]$ with $|D_j(x')|=2$. Thus, the proof is completed.
\qed\end{proof}

As a consequence of the result above, we deduce that a tree $T$ satisfies $\adimms_2(T)=2$ if and only if every vertex $x\in V(T)$ and every integer $i\in [\epsilon(x)]$ hold that $|D_i(x)|\ne 2$. Also, note that the result from Proposition \ref{prop:trees-adimms-1} could be applied to design an algorithm that can check in polynomial time whether a given tree satisfies $\adimms_2(T)=1$.

Since the situations $k=1$ and $k=2$ are already settled for trees, we next continue with the cases $k\ge 3$. In this situation, by using similar arguments as the ones used in Lemma \ref{lem:two-vertices-tree}, it can be readily observed that if a $k$-MARS of a tree $T$ contains two components, then $k\le 2$. Thus, the following result is deduced, and its proof omitted.

\begin{lemma}
\label{lem:k-3-connected}
If $S$ is a $k$-MARS of a tree $T$ for some $k\ge 3$, then $S$ induces a connected subtree of $T$.
\end{lemma}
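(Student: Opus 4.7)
The plan is to prove the contrapositive: assume $T[S]$ is disconnected, and show that $S$ must be a $k'$-MARS with $k' \le 2$. I would first choose two components $S_1, S_2$ of $T[S]$ and pick $u \in S_1$ and $v \in S_2$ minimizing $d_T(u,v)$ among all pairs of vertices from different components. Let $P = u w_1 \cdots w_r v$ be the unique $u,v$-path in $T$. A routine minimality check shows that none of $w_1, \ldots, w_r$ lies in $S$ (otherwise, if $w_i \in S$ belongs to some component $S_j$, then one of $(u, w_i)$ or $(w_i, v)$ is a pair from different components at distance strictly less than $d_T(u,v)$), and that $r \ge 1$ since two vertices of different components of $T[S]$ cannot be adjacent in $T$.

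The core of the argument is to show $|\Tilde{S}_{[w_1]}| \le 2$, from which $k \le 2$ follows. A key preliminary fact is that, when $r \ge 2$, $u$ is the unique $S$-neighbor of $w_1$ in $T$: any other $S$-neighbor $s'$ of $w_1$ would, by the tree property applied to the $s'{-}w_1{-}u$ path, lie in a component of $T[S]$ different from $S_1$, and then the pair $(u, s')$ would be at distance $2$, strictly less than $d_T(u,v) = r+1 \ge 3$, contradicting minimality. Consequently the value $1$ occurs exactly once in $m(w_1|S)$, so every $z \in \Tilde{S}_{[w_1]} \setminus \{w_1\}$ has exactly one $S$-neighbor. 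Decomposing distances from $w_1$ blockwise according to which component of $T[S]$ the target vertex lies in, one obtains $d(w_1,s) = 1 + d_T(u,s)$ for $s \in S_1$, $d(w_1,s) = r + d_T(v,s)$ for $s \in S_2$, and an analogous shifted expression for each remaining component of $T[S]$ in terms of its point of attachment to $P$.

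The main obstacle is then to show that any candidate $z \in \Tilde{S}_{[w_1]} \setminus \{w_1\}$ is forced to be $w_r$. For $z$ lying off the path $P$, $z$ sits in a subtree hanging off some $x \in V(P)$ at distance $t \ge 1$ from $x$, introducing a uniform $+t$ shift into each distance block of $m(z|S)$; matching $m(z|S)$ against $m(w_1|S)$ block by block (in the spirit of the sum argument used in the proof of Lemma~\ref{lem:two-vertices-tree}, where off-path vertices have strictly larger ``path sum'') forces $t = 0$, a contradiction. For $z = w_i$ on $P$ with $i \ne 1$, comparing the blockwise decompositions of $m(w_i|S)$ and $m(w_1|S)$ and using either the direct matching of blocks to their counterparts or the symmetric ``swap'' between the $S_1$-block and the $S_2$-block yields $i = r$. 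A short separate treatment handles the degenerate case $r = 1$, where $w_1$ has both $u$ and $v$ as neighbors and the value $1$ appears in $m(w_1|S)$ with multiplicity exactly two. Combining these, $|\Tilde{S}_{[w_1]}| \le 2$, so $S$ cannot be a $k$-MARS with $k \ge 3$, proving the lemma.
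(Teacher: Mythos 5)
Your reduction to the contrapositive, the choice of a closest pair of components $S_1,S_2$, and the verification that the interior of the connecting path $P=uw_1\cdots w_rv$ avoids $S$ are all fine, as is the observation that distances from $w_1$ to $S_1$ (resp.\ $S_2$) factor through $u$ (resp.\ $v$). The gap is in the core claim that $\Tilde{S}_{[w_1]}\subseteq\{w_1,w_r\}$, which is simply false. Take the spider $T$ with centre $c$ and three legs $c\,a_i\,b_i$ ($i=1,2,3$), and let $S=\{b_1,b_2,b_3\}$, which induces three components. The closest pair gives $P=b_1a_1ca_2b_2$, so $w_1=a_1$, $r=3$, and all your preliminary facts hold ($u=b_1$ is the unique $S$-neighbour of $a_1$, the value $1$ has multiplicity one in $m(a_1|S)=\msl 1,3,3\msr$). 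Yet $m(a_3|S)=\msl 1,3,3\msr$ as well, so $\Tilde{S}_{[w_1]}=\{a_1,a_2,a_3\}$ has cardinality $3$ even though $a_3$ is neither $w_1$ nor $w_r$ and lies off $P$. The step that breaks is the ``uniform $+t$ shift'' for off-path vertices: if $z$ hangs off $x\in V(P)$ at distance $t$, then $d_T(z,s)=t+d_T(x,s)$ only for those $s\in S$ that do \emph{not} lie in the branch of $T-x$ containing $z$; components of $S$ other than $S_1,S_2$ can perfectly well sit in that branch (here $b_3$ sits beyond $a_3$, so $d(a_3,b_3)=1$, not $1+d(c,b_3)=3$). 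The sum argument of Lemma~\ref{lem:two-vertices-tree} relied on $S$ consisting exactly of the two endpoints of $P$, which made every off-path vertex have strictly larger distance sum; with many scattered components this monotonicity disappears (in the example $\sum_{s}d(a_3,s)=\sum_{s}d(a_1,s)=7$).

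The problem is not merely in the write-up: in the example above \emph{no} choice of closest pair makes the class of $w_1$ small, so any correct proof must locate a small equivalence class elsewhere (in the example it is the singleton class of the midpoint $c$, with $m(c|S)=\msl 2,2,2\msr$). The paper itself omits its proof, asserting only that arguments ``similar'' to Lemma~\ref{lem:two-vertices-tree} apply, so there is no detailed argument to compare against; but your particular route through $\Tilde{S}_{[w_1]}$ cannot be completed. A repair would have to examine the classes of the interior path vertices more globally --- for instance via the median of $S$ (the minimizers of $z\mapsto\sum_{s\in S}d_T(z,s)$, a quantity determined by $m(z|S)$ and whose minimizer set in a tree is a vertex or an edge), or by showing that some $w_i$ near the middle of $P$ has at most one possible companion --- rather than by bounding the class of the single vertex $w_1$.
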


Since finding the $k$-multiset antidimension of a tree when $k\ge 3$ seems to be a hardworking task, we now concentrate on the case of complete binary trees. That is, given an integer $d\ge 1$, a tree $T_{d}$ is a \textit{complete binary tree}, if it has a root vertex $r$ of degree $2$ such that all the leaves of $T_{d}$ are at distance exactly $d$ from $r$, and any other vertex of $T_{d}$ has degree $3$. This means that every non leaf vertex of $T_{d}$ has $2$ children. Clearly, if $d=1$, then $T_{d}$ is simply a the path $P_3$. Since paths have already been considered, we assume $d\ge 2$. The value $d$ is frequently called as the \textit{height} of $T_d$. Complete binary trees are indeed very well known and have been studied in several investigations. We next give some partial results on their $k$-multiset antidimension (although already considered, the result includes also the case $k=2$).

\begin{proposition}
\label{pro:binary}
For any integer $d\ge 2$, and any $k\in \{2,2^2,\dots 2^d\}$ it holds,
$$\adimms_k(T_{d})\le \sum_{i=1}^{\log_2 k} d^{i-1}.$$
\end{proposition}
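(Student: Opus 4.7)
The plan is to build, for each $k=2^j$ with $1\le j\le d$, an explicit $k$-MARS of size $2^j-1$. Because $d\ge 2$ forces $2^{i-1}\le d^{i-1}$ term by term, this will be enough:
$$2^j-1\;=\;\sum_{i=1}^{j}2^{i-1}\;\le\;\sum_{i=1}^{j}d^{i-1},$$
which is the stated bound.

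Let $r$ denote the root of $T_d$ and, following the terminology already introduced, write $D_i(r)$ for the set of vertices at distance $i$ from $r$. Set $j=\log_2 k$ and take
$$S=\bigcup_{i=0}^{j-1}D_i(r),$$
i.e.\ the collection of vertices in the top $j$ levels. Then $|S|=\sum_{i=0}^{j-1}2^i=2^j-1$.

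The main step is to show that $S$ is a $k$-MARS. The driving observation is the symmetry of $T_d$: for any two vertices $u,v$ at the same depth $i$ there is an automorphism $\varphi$ of $T_d$ that setwise fixes every level $D_{i'}(r)$ (and hence fixes $S$) with $\varphi(u)=v$; one builds $\varphi$ by swapping appropriate subtrees rooted at common ancestors of $u$ and $v$. Hence $m(u\mid S)=m(v\mid S)$, so each level $D_i(r)$ with $i\ge j$ lies in a single $\mathcal{R}_S$-equivalence class. Since $r\in S$ (because $j\ge 1$), the value $d_{T_d}(v,r)=i$ is recoverable from the multiset representation, so distinct levels give distinct classes. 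Consequently
$$\mathcal{Z}_S=\{D_j(r),D_{j+1}(r),\ldots,D_d(r)\},$$
and the smallest equivalence class has size $|D_j(r)|=2^j=k$, as required for $S$ to be a $k$-MARS.

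The only technical point that needs care is the symmetry claim. If one prefers to avoid explicit automorphisms, an equivalent counting verification works: for fixed $v\in D_i(r)$ with $i\ge j$ and fixed $\ell\le j-1$, the multiset of distances from $v$ to the $2^\ell$ vertices of $D_\ell(r)$ consists of one copy of $i-\ell$ (from the unique ancestor of $v$ at level $\ell$) together with $2^{\ell-m-1}$ copies of $i+\ell-2m$ for each $m=0,\ldots,\ell-1$ (coming from the level-$\ell$ vertices whose lowest common ancestor with $v$ lies at level $m$). These counts depend only on $i$ and $\ell$, not on which level-$i$ vertex $v$ was chosen. This is routine bookkeeping and presents no real obstacle.
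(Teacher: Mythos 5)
Your proposal is correct and uses essentially the same construction as the paper: take $S$ to be the top $\log_2 k$ levels of $T_d$ and observe that the $\mathcal{R}_S$-classes are exactly the remaining levels, the smallest of which has size $k$. You are more careful than the paper on two points — justifying via symmetry (or direct counting) that same-level vertices share a multiset representation, and noting that this $S$ actually has cardinality $\sum_{i=1}^{\log_2 k}2^{i-1}=k-1$, which is bounded above by the stated $\sum_{i=1}^{\log_2 k}d^{i-1}$ since $d\ge 2$ — but the underlying argument is the same.
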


\begin{proof}
Let $S$ be the set of vertices of $T_{d}$ containing the root $r$ and all the vertices at distance at most $\log_2 k$ from $r$ (for instance, if $k=4=2^2$, then $S$ is formed by $r$ and the two children of it). It can be readily observed that each equivalence class of the equivalence relation $\mathcal{R}_{S}$ is precisely formed by the vertices having the same distance to the root $r$, which are not in $S$ (for example, again if $k=2^2$, then the vertices at distance $3$, $4$, $\dots$, $d$ from $r$, form each of the equivalence classes of $\mathcal{R}_{S}$). Now, observe that these equivalence classes have cardinality $2^{k}$, $2^{k+1}, \dots, 2^d$. Thus, $S$ is a $2^k$-MARS, and the upper bound follows since $|S|=\sum_{i=1}^{\log_2 k} d^{i-1}$.
\qed\end{proof}

If $k=2$ in the result above, then we obtain that $\adimms_k(T_{d})\le 1$ for any integer $d\ge 2$, which is the exact value in this case. However, we remark that the bound above is not reached in general. To observe this, see for instance the binary tree $T_4$. In such a situation, the bound from Proposition \ref{pro:binary} gives $\adimms_4(T_{4})\le 3$, while indeed $\adimms_4(T_{4})=2$ (see Figure \ref{fig:T_4} for an example of a $4$-MARS of cardinality $2$).

The case $k=3$ can be also considered for the case of binary trees, and the following result is then obtained.

\begin{proposition}
For any integer $d\ge 2$, $\adimms_3(T_{d})=\left\{\begin{array}{ll}
    \infty; & \mbox{ if $d=2$}, \\
    2; & \mbox{ if $d=3$}, \\
    1; & \mbox{ if $d>3$}.
\end{array}\right.$
\end{proposition}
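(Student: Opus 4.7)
My plan is to handle the three cases $d\ge 4$, $d=3$, and $d=2$ separately, relying on the following recurring observation: when $|S|=1$, the equivalence classes of $\mathcal{R}_S$ coincide with the distance spheres $D_i(v)$ around the unique vertex $v\in S$, so that $\{v\}$ is a $k$-MARS with $k=\min_i |D_i(v)|$. Most of the argument reduces to counting these spheres at a well-chosen vertex.

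For $d\ge 4$, I will take $c$ to be any child of the root $r$ and split the vertices at distance $i$ from $c$ into those lying in the subtree $T_c$ rooted at $c$ (of height $d-1$) and those reaching $c$ through $r$ (hence lying in $\{r\}\cup T_{c'}$, where $c'$ is the sibling of $c$). This yields $|D_1(c)|=3$, $|D_i(c)|=2^i+2^{i-2}$ for $2\le i\le d-1$, $|D_d(c)|=2^{d-2}$, and $|D_{d+1}(c)|=2^{d-1}$. When $d\ge 4$, all these values are at least $3$, with equality only at $i=1$, so $\{c\}$ is a $3$-MARS and $\adimms_3(T_d)=1$.

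For $d=3$, the very same formula gives $|D_3(c)|=2$, so a level-$1$ vertex is not a $3$-MARS; analogous computations of $|D_i(v)|$ for each vertex orbit (root, level-$1$, level-$2$, leaf) of $T_3$ reveal a sphere of size at most $2$ in every case, giving $\adimms_3(T_3)\ge 2$. For the matching upper bound I will exhibit $S=\{r,c\}$ and compute the multiset representations of the $13$ remaining vertices: they partition into exactly three classes, namely $\msl 1,2\msr$ of size $3$ (the sibling $c'$ and the two children of $c$), $\msl 2,3\msr$ of size $6$ (the two children of $c'$ and the four grandchildren of $c$), and $\msl 3,4\msr$ of size $4$ (the four grandchildren of $c'$), so $S$ is a $3$-MARS and $\adimms_3(T_3)=2$.

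For $d=2$, I will rule out the existence of any $3$-MARS. By Lemma \ref{lem:k-3-connected}, any $3$-MARS is connected; and since every equivalence class of $\mathcal{R}_S$ must contain at least $3$ vertices, we need $|V(T_2)\setminus S|\ge 3$, i.e.\ $|S|\le 4$. The automorphism group of $T_2$ cuts the enumeration of connected $S$ with $1\le|S|\le 4$ down to a short list of orbits (three of singletons, two of edges, three of paths on three vertices, and two of subtrees on four vertices). For a representative of each orbit I will compute the classes of $\mathcal{R}_S$ and verify that the smallest has size at most $2$; the path $\{c,r,c'\}$ is the only borderline case, but its single class of size $4$ makes it a $4$-MARS rather than a $3$-MARS. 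The main obstacle is the tedium of this final enumeration, although the symmetry of $T_2$ keeps it bounded.
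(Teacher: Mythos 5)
Your proposal is correct and follows essentially the same route as the paper: the witness sets are identical (a single child of the root for $d\ge 4$, the root together with one child for $d=3$), and the negative cases are settled by the same kind of direct checking. The only difference is that you spell out the computations the paper dismisses as ``one can check'' — in particular the orbit-by-orbit enumeration for $T_2$, organized via Lemma~\ref{lem:k-3-connected} — which is a sound and complete way to fill that gap.
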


\begin{proof}
First, some simple computations allow to check that $T_2$ has no $3$-MARS, and so the conclusion follows in this case. Let now $d=3$. Since $T_3$ has only a few vertices, one can check that no single vertex form a $3$-MARS. Thus $\adimms_3(T_{d})\ge 2$. We consider a set $S$ formed by the root $r$ and exactly one of the children of $r$, say $r_1$. Now, the two children of $r_1$ together with the children of $r$, other than $r_1$, form an equivalence class of $\mathcal{R}_{S}$ (with cardinality $3$) having multiset representation $\msl 1, 2 \msr$. Since $d=3$, there are a few other equivalence classes in $\mathcal{R}_{S}$ of cardinality larger than $3$ having multiset representations $\msl x,x+1 \msr$ with $x\in \{2,3\}$. Therefore, $S$ is a $3$-MARS, and the equality follows in such case.

Finally, assume next $d\ge 4$. We consider now $S$ is a set formed by exactly one of the children of $r$, say $r_1$. Now, the two children of $r_1$ together with the root $r$, form an equivalence class of $\mathcal{R}_{S}$ (with cardinality $3$) having multiset representation $\msl 1 \msr$. Since $d\ge 4$, there are several other equivalence classes in $\mathcal{R}_{S}$ of cardinality larger than $3$ having multiset representations $\msl x \msr$ with $x\in \{2,\dots,d\}$. Therefore, $S$ is a $3$-MARS, and the required equality is obtained.
\qed\end{proof}

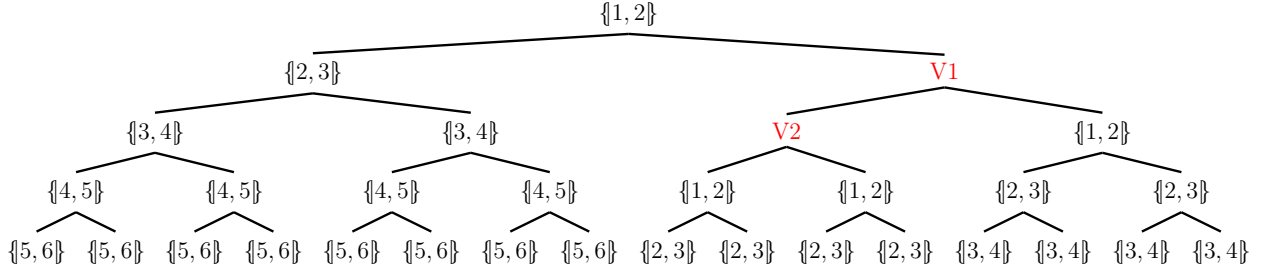
\begin{figure}[ht]
\centering
\begin{tikzpicture}[scale=0.75]
\tikzset{edge from parent/.append style={very thick}}
\Tree [.{$\msl 1,2 \msr$}
		[.{$\msl 2,3 \msr$}
			[.{$\msl 3,4 \msr$}
				[.{$\msl 4,5 \msr$}
					[.{$\msl 5,6 \msr$} ]
					[.{$\msl 5,6 \msr$} ]
				]
				[.{$\msl 4,5 \msr$}
					[.{$\msl 5,6 \msr$} ]
					[.{$\msl 5,6 \msr$} ]
				]
            ]
            [.{$\msl 3,4 \msr$}
				[.{$\msl 4,5 \msr$}
					[.{$\msl 5,6 \msr$} ]
					[.{$\msl 5,6 \msr$} ]
				]
				[.{$\msl 4,5 \msr$}
					[.{$\msl 5,6 \msr$} ]
					[.{$\msl 5,6 \msr$} ]
				]
			]
		]
		[.\textcolor{red}{V1}
			[.\textcolor{red}{V2}
				[.{$\msl 1,2 \msr$}
					[.{$\msl 2,3 \msr$} ]
					[.{$\msl 2,3 \msr$} ]
				]
				[.{$\msl 1,2 \msr$}
					[.{$\msl 2,3 \msr$} ]
					[.{$\msl 2,3 \msr$} ]
				]
            ]
            [.{$\msl 1,2 \msr$}
				[.{$\msl 2,3 \msr$}
					[.{$\msl 3,4 \msr$} ]
					[.{$\msl 3,4 \msr$} ]
				]
				[.{$\msl 2,3 \msr$}
					[.{$\msl 3,4 \msr$} ]
					[.{$\msl 3,4 \msr$} ]
				]
			]
		]
	 ]
\end{tikzpicture}
\caption{The multiset representations of the vertices with respect to the set of red-coloured vertices $V1$ and $V2$ are given as labels of each vertex. Notice that the class with multiset $\msl 1,2 \msr$ has cardinality $4$, while the remaining classes have more than $4$ vertices.}\label{fig:T_4}
\end{figure}

\subsection{The case of wheel graphs}

A \textit{wheel graph} $W_{1,n-1}$ is obtained from a cycle $C_{n-1}$ of order $n-1$ and an extra singleton vertex $v$, by adding an edge between $v$ and all the vertices of the cycle. The vertex $v$ is usually called the center of the wheel. We first observe that $W_{1,n-1}$ has the maximum degree $\Delta=n-1=|V(W_{1,n-1})|-1$. Thus, from Proposition \ref{prop:trivial-bounds}, it follows that $\kappa(W_{1,n-1})=n-1$. In this subsection, we consider the vertex set of the wheel graph $W_{1,n-1}$ to be $V(W_{1,n-1}) = \{0\} \cup [n-1] = \{0, 1, \ldots, n-1\}$, where the vertex $0$ has degree $n - 1$, and the vertices in $[n-1]$ induce the cycle $C_{n-1}$, with adjacency defined naturally.

\begin{lemma}\label{CentralBelongToAll}
Let $n\ge 7$. If $S$ is a $k$-MARS of the wheel graph $W_{1,n-1}$ for some $k\ge 4$, then $0\in S$.
\end{lemma}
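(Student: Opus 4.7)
The plan is to argue by contradiction: assume $0\notin S$ and show that the equivalence class of $0$ under $\mathcal{R}_S$ has size at most $3$, so $S$ can only be a $k$-MARS for some $k\le 3$, contradicting $k\ge 4$. The key structural facts about $W_{1,n-1}$ that I will exploit are: (i) the center $0$ is adjacent to every other vertex, so $d(0,s)=1$ for all $s\in S$, which forces $m(0|S)$ to be the multiset consisting of $|S|$ copies of $1$; and (ii) the diameter of $W_{1,n-1}$ equals $2$, so distances from any non-central vertex $v\in[n-1]\setminus S$ to vertices of $S$ lie in $\{1,2\}$, with $d(v,s)=1$ if and only if $s\in\{0\}\cup N_{C_{n-1}}(v)$; since $0\notin S$, the number of $1$'s in $m(v|S)$ is exactly $|N_{C_{n-1}}(v)\cap S|\le 2$.

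I would then split into three cases according to the size of $S$. If $|S|\ge 3$, then $m(0|S)$ contains at least three $1$'s, while by (ii) no other vertex $v\neq 0$ has three or more $1$'s in its multiset, so the class of $0$ is a singleton. If $|S|=2$, say $S=\{a,b\}$, then $m(0|S)=\msl 1,1\msr$ and a vertex $v\notin S\cup\{0\}$ shares this multiset only if $v$ is adjacent on the cycle to both $a$ and $b$; this forces $a,b$ to be at cycle-distance exactly $2$, in which case there is a unique such $v$, so the class of $0$ has size at most $2$. If $|S|=1$, say $S=\{s\}$, then $m(0|S)=\msl 1\msr$ and the vertices sharing this multiset are precisely $0$ together with the two cycle-neighbors of $s$, giving a class of size exactly $3$.

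In all three cases the class of $0$ has cardinality at most $3$, so by the characterization of $k$-MARS via the equivalence classes of $\mathcal{R}_S$ (as recalled in the preliminaries), $S$ cannot be a $k$-MARS for any $k\ge 4$. This contradiction forces $0\in S$. The main obstacle, such as it is, is case $|S|=2$: one must verify carefully that the only way a non-central vertex can match $m(0|S)=\msl 1,1\msr$ is to be the unique common cycle-neighbor of the two elements of $S$, which relies on the fact that any non-central vertex has only two cycle-neighbors. The hypothesis $n\ge 7$ plays no direct role in bounding the class of $0$; it merely rules out small wheels where other pathological configurations might obscure the clean three-case analysis.
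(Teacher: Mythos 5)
Your proposal is correct and follows essentially the same route as the paper: assume $0\notin S$, split on $|S|$, and show that the equivalence class of $\mathcal{R}_S$ containing the hub $0$ (whose multiset is all $1$'s) has at most $3$ elements, contradicting $k\ge 4$. The only cosmetic difference is that you track the class of $0$ uniformly across all cases, whereas the paper phrases the $|S|=2$, cycle-distance-$2$ subcase separately before collapsing the rest into "the class of $0$ is a singleton"; the content is identical.
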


\begin{proof}
Suppose, for the sake of contradiction, that $0 \notin S$. If $|S| = 1$, then there is an equivalence class with the multiset representation $\msl 1 \msr$ containing only $3$ vertices, which contradicts the fact that $k \geq 4$. If $|S| = 2$ and $S$ is formed by two vertices $i,j\in [n-1]$ of the cycle $C_{n-1}$ with $j=i+2$, then there is an equivalence class with the multiset representation $\msl 1,1 \msr$ containing only $2$ vertices ($0$ and the vertex $i+1$), which contradicts the fact that $k \geq 4$. Hence, assume that either $|S| \ge 3$, or $|S| = 2$ and $S$ is not composed of two vertices $i, j \in [n-1]$ such that $j = i + 2$. In this case, we have an equivalence class with multiset $\msl 1,\dots, 1\msr$ (with $|S|$ entries all equal to $1$) whose only vertex is $0$, which is a contradiction again. Therefore, $0$ belongs to any $k$-MARS of $W_{1,n-1}$.
\qed\end{proof}

The preceding lemma will assist in determining the $k$-multiset antidimension of wheel graphs for all suitable $k$. Observe first that $\adimms_{n-1}(W_{1,n-1}) = 1$, and the remaining cases are analyzed below.

\begin{theorem}
Let $W_{1,n-1}$ be a wheel graph of order $n\ge 7$. If $k\equiv 0\pmod 2$, then for any $4\le k< n-1$,
$$\adimms_k(W_{1,n-1})=\left\{\begin{array}{ccl}
   \dfrac{k+2}{2}; & \;\;\; &\text{if $n\ge\dfrac{5k+2}{2}$}, \\
    n-k; & \;\;\; &\text{if $\dfrac{3k+2}{2}\le n<\dfrac{5k+2}{2}$}, \\
    \infty; & \;\;\; &\text{otherwise}.
\end{array}\right.$$
If $k\equiv 1 \pmod 2$, then for any $5\le k< n-1$,
$$\adimms_k(W_{1,n-1})=\left\{\begin{array}{ccl}
   \min\left\{\left\lfloor\dfrac{n-1-k}{3}\right\rfloor+t+1,\dfrac{3k+3}{2}\right\}; & \;& \text{if $n\ge\dfrac{5k+5}{2}$}, \\
    n-k; & \;& \text{if $2k< n<\dfrac{5k+5}{2}$}, \\
    \infty; & \;& \text{otherwise},
\end{array}\right.$$
where $t$ is the remainder of dividing $(n-1-k)$ by $3$.
\end{theorem}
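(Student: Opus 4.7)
By Lemma \ref{CentralBelongToAll}, for the values of $k$ in the statement every $k$-MARS of $W_{1,n-1}$ contains the central vertex $0$; write $S=\{0\}\cup T$ with $T\subseteq[n-1]$ and $s=|S|$. Since $W_{1,n-1}$ has diameter $2$, any $v\in[n-1]\setminus T$ satisfies $d(v,0)=1$ and $d(v,i)\in\{1,2\}$ for $i\in T$, with $d(v,i)=1$ iff $i$ is a cycle-neighbour of $v$. The multiset $m(v|S)$ is therefore determined by $a(v)\in\{0,1,2\}$, the number of cycle-neighbours of $v$ lying in $T$, producing at most three equivalence classes of sizes $n_0,n_1,n_2$; and $S$ is a $k$-MARS iff $\min\{n_j:n_j>0\}=k$. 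The plan is then to parametrise $T$ by its arc/gap decomposition on $C_{n-1}$: with $r$ maximal arcs of $T$-vertices, $r_1$ gaps of length $1$ and $r_{\ge 2}$ gaps of length $\ge 2$ (so $r=r_1+r_{\ge 2}$), a direct count gives
\[
n_2=r_1,\qquad n_1=2r_{\ge 2},\qquad n_0=(n-s)-r_1-2r_{\ge 2},
\]
subject to the arc constraint $s-1\ge r$. A crucial observation is that $n_1$ is always even, which is what forces the parity split in the statement.

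For $k$ even I would match the claim in each of the three regions. Taking $r=k/2$ singleton arcs with all gaps of length $\ge 2$ yields $n_1=k$, $n_2=0$, $n_0=(n-s)-k$; this is a $k$-MARS of size $(k+2)/2$ exactly when $n_0=0$ or $n_0\ge k$, equivalently $n\ge(5k+2)/2$. In the intermediate range $(3k+2)/2\le n<(5k+2)/2$ I would instead take the same $k/2$ arcs with all gaps of length exactly $2$, giving $n_1=k$, $n_0=n_2=0$ and $s=n-k$. The lower bounds come from a case split according to which of $n_0,n_1,n_2$ attains the minimum $k$: for each case the forced values of $r_1,r_{\ge 2}$ together with $s\ge r+1$ yield the stated size (e.g.\ $n_2=k$ needs $r\ge k$, hence $s\ge k+1$), and these inequalities pin down the thresholds $(5k+2)/2$ and $(3k+2)/2$. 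When $n<(3k+2)/2$ every configuration is blocked, giving $\adimms_k(W_{1,n-1})=\infty$.

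For $k$ odd the parity of $n_1$ forbids $n_1=k$, so the minimum class of size $k$ must be $n_0$ or $n_2$. In Case A ($n_0=k$) I would set $r_1=0$ and $r_{\ge 2}\ge(k+1)/2$ so that $n_2=0$ and $n_1\ge k+1$; the equation $(n-s)-2r=k$ combined with $s\ge r+1$ yields $s=n-k-2r$, minimised at $r=\lfloor(n-k-1)/3\rfloor$. Writing $n-1-k=3q+t$ with $t\in\{0,1,2\}$, this gives the first term $q+t+1$, feasible iff $q\ge(k+1)/2$, i.e.\ $n\ge(5k+5)/2$. In Case B ($n_2=k$) I would set $r_1=k$; if additionally $n_1=0$ then all gaps have length $1$, $r=k$, and $s=n-k$, feasible iff $n>2k$, accounting for the second branch of the odd formula. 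Otherwise $r_{\ge 2}\ge(k+1)/2$ and the arc constraint forces $s\ge k+r_{\ge 2}+1\ge(3k+3)/2$, with equality attained by taking all arcs of length $1$, $r_1=k$, $r_{\ge 2}=(k+1)/2$, and distributing the remaining gap-length so that $n_0\in\{0\}\cup[k,\infty)$. The $\min$ in the statement records the smaller of Cases A and B, while the $\infty$ case $n\le 2k$ follows because neither case is feasible in this range.

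The main obstacle is the tightness of the two-term $\min$ for odd $k$. With $r_1=k$ and $r_{\ge 2}=(k+1)/2$ chosen as above, one computes $n_0=n-(7k+5)/2$, so $s=(3k+3)/2$ is actually realised only when $n=(7k+5)/2$ or $n\ge(9k+5)/2$, leaving an awkward window in which $(3k+3)/2$ is \emph{not} directly attainable. The plan to close this gap is a short modular computation showing that $q+t+1\ge(3k+3)/2$ already forces $n\ge(9k+5)/2$; equivalently, throughout the awkward window one has $q+t+1<(3k+3)/2$, so Case A strictly undercuts Case B there and the $\min$ formula still returns the attainable value of $\adimms_k(W_{1,n-1})$.
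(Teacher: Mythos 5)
Your proposal is correct and follows essentially the same route as the paper's proof: forcing the hub into $S$ via Lemma \ref{CentralBelongToAll}, classifying the remaining vertices into three classes by the number of selected cycle-neighbours (the paper's $V_1,V_2,V_3$ are your $n_0,n_1,n_2$), exploiting the evenness of the middle class to split by the parity of $k$, and running the same case analysis with the same extremal constructions. Your explicit arc/gap parametrisation and the closing modular computation (showing $q+t+1\ge(3k+3)/2$ forces $n\ge(9k+5)/2$) are just tidier renderings of the paper's gap-counting arguments and of its final remark that $\lfloor(n-1-k)/3\rfloor+t+1\le(3k+3)/2$ when $n\le(11k+11)/2$.
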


\begin{proof}
In connection with the notation already defined for the vertices of the wheel, when we mention a gap between two consecutive vertices of a $k$-MARS $S$, we are specifically referring to two consecutive (with respect to the natural labeling of the cycle $C_{n-1}$) vertices of $S\cap (V(W_{1,n-1})\setminus\{0\})$.

We first need to keep the following facts in mind. Based on Lemma~\ref{CentralBelongToAll} and the fact that $D(W_{1,n-1}) = 2$, any $k$-MARS of $W_{1,n-1}$ has at most three equivalence classes. For each $i \in \{1, 2, 3\}$, we define $V_i$ as the class of vertices whose multiset representations contain exactly $i$ entries equal to $1$, with all other entries equal to $2$. Moreover, since $k < n - 1$, at least one of the equivalence classes must contain exactly $k$ vertices, while the other two are either empty or contain at least $k$ vertices. Consequently, we have $\sum_{i=1}^3 |V_i| \ge k$. Observe also that in any $k$-MARS of $W_{1,n-1}$, a gap of at least two vertices between consecutive selected vertices necessarily contains exactly two vertices from $V_2$. Furthermore, if $V_1 \ne \emptyset$ for some $k$-MARS $S$, then there must exist a gap of at least three vertices between two consecutive vertices in $S$, which implies that $V_2 \ne \emptyset$.

First, we will assume that $k\equiv 0\pmod 2$. Let $B$ be a $k$-MARB of $W_{1,n-1}$. Observe that $B\ne \{0\}$ because $\{0\}$ is an $(n-1)$-MARB, and we have assumed $k<n-1$. Also, note that each vertex in $B\cap (V(W_{1,n-1})\setminus \{0\})$ has at most two neighbors belonging to the same equivalence class, $V_1$ or $V_2$. In addition, the fact that $1\leq|B\cap(V(W_{1,n-1})\setminus\{0\})|<n-1$ means that $V_1$ or $V_2$ are non-empty. Furthermore, considering that $|V_1|\geq k$ or $ |V_2|\geq k$, we conclude that $\adimms_k(W_{1,n-1})=|B|\ge\dfrac{k}{2}+1=\dfrac{k+2}{2}$. We analyze the following three cases.

\medskip
\noindent
\textbf{Case 1:} $n=\dfrac{3k+2}{2}$ or $n\ge\dfrac{5k+2}{2}$. Let $\displaystyle S=\{0\}\cup \bigcup_{i=1}^{\frac{k}{2}}(3i-2)$ be a subset of vertices of $W_{1,n-1}$. We claim that $S$ is a $k$-MARS of $W_{1,n-1}$. If $n=\dfrac{3k+2}{2}$, then we have $|V_2|=k$ and $V_1=V_3=\emptyset$, which leads to confirm that $S$ is a $k$-MARS of $W_{1,n-1}$. If $n\ge\dfrac{5k+2}{2}$, then we deduce $|V_2|=k$, $V_3=\emptyset$ and $|V_1|\ge k$ and consequently, $S$ is also a $k$-MARS of $W_{1,n-1}$. Therefore, $\adimms_k(W_{1,n-1})\le |S|=\dfrac{k+2}{2}$ from which it follows that $\adimms_k(W_{1,n-1})=\dfrac{k+2}{2}$.

\medskip
\noindent
\textbf{Case 2:} $\dfrac{3k+2}{2}\le n<\dfrac{5k+2}{2}$. Since $n=|B|+\sum_{i=1}^3|V_i|\ge \dfrac{k+2}{2}+\sum_{i=1}^3|V_i|$ and $n<\dfrac{5k+2}{2}$, it follows that $\sum_{i=1}^3|V_i|<2k$. Since at least one of these three equivalence classes must contain exactly $k$ vertices, the other two equivalence classes must necessarily be empty. Note that if $|V_1|=k$, then $V_2\ne\emptyset$, which leads to a contradiction. Therefore, we are left with two possibilities: either $|V_2|=k$ and $V_1=V_3=\emptyset$, or $|V_3|=k$ and $V_1=V_2=\emptyset$. In both cases it is straightforward to check that $\adimms_k(W_{1,n-1})=|B|=n-k$. In the first scenario, for each of the $k$ vertices in $V_2$, at least $\dfrac{k}{2}$ vertices from $B\cap(V(W_{1,n-1}) \setminus \{0\})$ are required. Hence, this scenario can only occur if $n\ge\dfrac{3k+2}{2}$. In the second scenario, for each of the $k$ vertices in $V_3$, at least $k$ vertices from $B\cap(V(W_{1,n-1}) \setminus \{0\})$ are needed. Therefore, this case can only happen if $n\ge\dfrac{4k+2}{2}$. Finally, we highlight the fact that if $n=\dfrac{3k+2}{2}$, then $\adimms_k(W_{1,n-1})=\dfrac{k+2}{2}=n-k$. Hence, we now consider the case where $n=\dfrac{3k+2}{2}$ in Case 2, rather than in Case 1.

\medskip
\noindent
\textbf{Case 3:} $k+1<n<\dfrac{3k+2}{2}$. Since $n=|B|+\sum_{i=1}^3|V_i|\ge\dfrac{k+2}{2}+\sum_{i=1}^3|V_i|$, we have $\sum_{i=1}^3|V_i|<k$, which leads to a contradiction. Therefore in this case there is no $k$-MARS of $W_{1,n-1}$.

\medskip
Taking into account the previous three cases and the fact that $n-k<\dfrac{k+2}{2}$ when $n<\dfrac{3k+2}{2}$, we have derived the result for $k\equiv 0\pmod 2$.

\bigskip
From now on assume that $k\equiv 1 \pmod 2$. As can be inferred from the aforementioned facts, $|V_2|\equiv 0\pmod 2$. Therefore, in this case, if $V_2\ne\emptyset $, then we have $|V_2|\ge k+1$. As a result, we have the following five cases for equivalence classes, each of which will yield the $k$-MARS with the minimum cardinality for that case.

\medskip
\noindent
\textbf{Case 4:} $|V_3|=k, |V_1|\ge k, |V_2|\ge k+1$. Let $S$ be a $k$-MARS of $W_{1,n-1}$ with minimum cardinality under these conditions. Since the $k$ vertices of $V_3$ have two neighbors in $S\cap(V(W_{1,n-1})\setminus \{0\})$ and $V_1\ne\emptyset$, there are at least $k+1$ neighboring vertices of $V_3$ in $S\cap(V(W_{1,n-1})\setminus \{0\})$. On the other hand, since there are exactly two vertices from $V_2$ in each gap of at least two vertices between two consecutive vertices in $S$, we can conclude that at least $\dfrac{k-1}{2}$ additional vertices must be part of $S\cap(V(W_{1,n-1})\setminus\{0\})$. Thus $|S|\ge (k+1)+\dfrac{k-1}{2}+1=\dfrac{3k+3}{2}$ and $\displaystyle n=|S|+\sum_{i=1}^3|V_i|\ge\dfrac{9k+5}{2}$. Let $\displaystyle S'=\{0\}\cup\bigcup_{i=1}^{k+1}(2i-1)\cup\bigcup_{i=1}^{\frac{k-1}{2}}(2k+3i+1)$ be a subset of vertices of $W_{1,n-1}$. We claim that $S'$ is a $k$-MARS of $W_{1,n-1}$. To this end, we can observe that $S'$ contains exactly $k$ gaps of one vertex and $\dfrac{k+1}{2}$ gaps of at least two vertices. Given that $n\ge\dfrac{9k+5}{2}$, one of these gaps with at least two vertices must contain at least $k+2$ vertices. Therefore, we have $|V_3| = k$, $|V_1| \ge k$, and $|V_2|=k+1$ with respect to $S'$. As a consequence, $S'$ is a $k$-MARS under these conditions $|S|\le |S'|=\dfrac{3k+3}{2}$, and thus, if $n\ge\dfrac{9k+5}{2}$ we conclude that $|S|=\dfrac{3k+3}{2}$.

\medskip
\noindent
\textbf{Case 5:} $|V_1|=k, |V_3|=0, |V_2|\ge k+1$. Let $S$ be a $k$-MARS of $W_{1,n-1}$ with minimum cardinality under these conditions. Since $|V_3| = 0$, each vertex $u \in V_2$ has exactly one consecutive vertex $v \in V_2 \setminus\{u\}$ such that at least one vertex of $S$ lies between $u$ and $v$ in $S\cap(V(W_{1,n-1})\setminus\{0\})$. Furthermore, given that $|V_1|=k$, we deduce that $|S\cap(V(W_{1,n-1})\setminus\{0\})|\ge\left\lfloor\dfrac{n-1-k}{3}\right\rfloor+t$, where $t$ is the remainder when $(n-1-k)$ is divided by $3$. Thus, $|S|\ge\left\lfloor\dfrac{n-1-k}{3}\right\rfloor+t+1$ and $\displaystyle n=|S|+|V_1|+|V_2|\ge\left(\dfrac{|V_2|}{2}+1\right)+|V_1|+|V_2|\ge\dfrac{5k+5}{2}$. Let $\displaystyle S'=\{0\}\cup\bigcup_{i=1}^{\lfloor\frac{n-1-k}{3}\rfloor}(3i-2)\cup\bigcup_{i=1}^{t}\left(3\left\lfloor\frac{n-1-k}{3}\right\rfloor-2+i\right)$ be a subset of vertices of $W_{1,n-1}$. We claim that $S'$ is a $k$-MARS of $W_{1,n-1}$. With this goal, we can realize that $|V_1|=k$, $|V_2|=2\left\lfloor\dfrac{n-1-k}{3}\right\rfloor$ and $|V_3|=\emptyset$. Since $n\ge\dfrac{5k+5}{2}$, we have $|V_2|\ge k+1$, and as a consequence, $S'$ is a $k$-MARS under these conditions. Thus, $|S|\le |S'|=\left\lfloor\dfrac{n-1-k}{3}\right\rfloor+t+1$ and we conclude that if $n\ge\dfrac{5k+5}{2}$, then $|S|=\left\lfloor\dfrac{n-1-k}{3}\right\rfloor+t+1$.

\medskip
\noindent
\textbf{Case 6:} $|V_3|=k, |V_1|=|V_2|=0$. In this case, it is straightforward to see $|S|=n-k$, where $S$ is a $k$-MARS of $W_{1,n-1}$ with minimum cardinality under these conditions. Furthermore, since each vertex in $V_3$ requires at least one vertex in $S \cap (V(W_{1,n-1})\setminus\{0\})$, we have $n = |S| + |V_3| \ge (k+1) + k = 2k + 1$.

\medskip
\noindent
\textbf{Case 7:} $|V_3|=k, |V_1|=0, |V_2|\ge k+1$. Let $S$ be a $k$-MARS of $W_{1,n-1}$ with minimum cardinality under these conditions. Note that each vertex in $V_3$ needs at least one vertex in $S \cap (V(W_{1,n-1})\setminus\{0\})$. Furthermore, for the remaining $n-1-2k$ vertices in $V(W_{1,n-1}) \setminus \{0\}$, we require that for every 2 vertices in $V_2$, there is at least one vertex in $S\cap (V(W_{1,n-1}) \setminus \{0\})$. Therefore, $|S|\ge k+\left\lfloor\dfrac{n-1-2k}{3}\right\rfloor+t+1=\left\lfloor\dfrac{n-1+k}{3}\right\rfloor+t+1$, where $t$ is the remainder when $(n-1-k)$ is divided by $3$. Since $n=|S|+|V_2|+|V_3|\ge \left(k+\dfrac{|V_2|}{2}+1\right)+|V_2|+|V_3|\ge \dfrac{7k+3}{2}$, $S$ cannot serve as a $k$-MARB of $W_{1,n-1}$, as its cardinality will always exceed that of Case 2.

\medskip
\noindent
\textbf{Case 8:} $|V_1|=k, |V_3|\ge k, |V_2|\ge k+1$. In this case, following a procedure similar to that in Case 4, we conclude that for any $k$-MARS $S$ with minimum cardinality under these conditions, the inequality $|S| \ge \dfrac{3k+3}{2}$ holds for $n \ge \dfrac{9k+5}{2}$. Thus, in this case, we can always select a minimum cardinality $k$-MARS $S'$ that satisfies the conditions of Case 4.

\medskip
Considering the previous 5 cases and the fact that $\dfrac{5k+5}{2} < \dfrac{9k+5}{2}$, as well as $\left\lfloor\dfrac{n-1-k}{3}\right\rfloor +t+1\le\dfrac{3k+3}{2}$ when $n\le\dfrac{11k+11}{2}$, we deduce the desired result for $k \equiv 1 \pmod{2}$.
\qed\end{proof}

\section{Mathematical optimization formulations}

A discrete optimization model for the $k$-metric antidimension of graphs was previously presented in \cite{Fernandez2023}, which was used there to make several implementations on some classical families of graphs, as well as, on some random graphs. In this section, related formulations for the $k$-MAD are developed.

\subsection{Formulation with general integer variables}

The formulation is based on the definition of a $k$-MARS $S$, through its equivalence classes. It is built over several sets of binary decision variables and one set of general integer variables.
The set of binary variables $\mathbf s$ determines the elements of the set $S$. The set of binary variables $\mathbf q$ determines the elements of the different classes.
The vertex classes are determined by subsets of vertices that jointly satisfy some compatibility conditions, and will be referred to as $Q$-\emph{subsets}. To avoid multiple representations of the same solution, each $Q$-subset has a unique representative, which is its lowest index vertex.\\
Moreover, we use a set of general integer variables, $\mathbf t$ that ``count'' the number of vertices in the set $S$ at a fixed distance from a given vertex.  We use the notation $R=\{1, \dots, \bar d_{max}\}$ for the values of the possible pairwise distances;  indeed, the maximum potential distance, $\bar d_{max}$, is the diameter of the graph.

Hence, initially, we define the following sets of decision variables, although further sets of binary decision variables will be introduced later:
\begin{align}
s_u & = 1 \iff u \in S \qquad && \forall u\in V \nonumber \\
q_{uv} & = 1 \iff v\text{ is in the $Q$-subset with representative $u$} \qquad && \forall u,v\in V, v\geq u,\nonumber\\
t_{ur}: & \qquad\text{ number of vertices of } S\text{ at distance $r\in R$ from $u$} \qquad && \forall u\in V.\nonumber
\end{align}

With the above decision variables we can determine vertex sets $S=\{u \in V: s_u=1\}$ of cardinality $|S|=\sum_{u \in V} s_u$, and $Q$-subsets $Q^u=\{v\in V: q_{uv}=1\}$. Since $q_{uu}=1$ indicates that $u$ is the lowest index vertex of a $Q$-subset, the actual number of classes determined by the solution is $\overline r=\sum_{u\in V}q_{uu}$. The number of vertices of $S$ at distance $r$ from a given vertex $u$ can be expressed as $t_{ur}=\sum\limits_{\substack{v\in V: d_{uv}=r}} s_v$.

The (preliminary) formulation is as follows.

\begin{align}
F\qquad\hspace{-0.3cm}\min \quad &  \sum_{u \in V} s_u  && \\
& \sum_{u\in V} s_{u}  \ge 1 && \label{2ineq:S-non-empty}\\
& s_u+\sum_{v\in V: v\leq u} q_{vu}  = 1 && u\in V\label{2ineq:partition}\\ 
& q_{uv}\leq q_{uu} && u,v\in V,\, u\geq v\label{2ineq:logic1}\\
& s_u+\,q_{uv}  \leq 1  &&  u,v\in V,\, v\geq u \label{2ineq:incompatibility}\\
& \sum_{v\in V: v> u} q_{uv}  \ge (k-1) q_{uu} && u \in V \label{2ineq:cardinality}\\
& t_{ur}=\sum_{\substack{v\in V\\ d_{uv}=r}}s_v  && u \in V, r\in R \label{2ineq:count}\\
& t_{ur}\geq t_{vr}- M(1-q_{uv}) \qquad && u,v \in V,\, u\leq v \label{2ineq:def_class1}\\
& t_{vr}\geq t_{ur}- M(1-q_{uv}) \qquad && u,v \in V,\, u\leq v \label{2ineq:def_class2}\\
& s_u\in\{0, 1\}, \, &&  u\in V;\\
& q_{uv}\in\{0, 1\},\, &&  u, v \in V, v\geq u\\
& t_{ur}\in \mathbf{Z}^+,\, && u \in V, r\in R.
\end{align}


Constraints \eqref{2ineq:S-non-empty} impose that the set $S$ is not empty and Constraints \eqref{2ineq:partition}-\eqref{2ineq:incompatibility}  guarantee that we obtain a partition of the vertex set: every vertex either is in $S$ or in a single class, whose representative is not an element of $S$.
By Constraints \eqref{2ineq:cardinality}, the cardinality of each class is at least $k$. While the equalities \eqref{2ineq:count} establish the number of elements of $S$ at every distance of a given vertex $u$, the inequalities \eqref{2ineq:def_class1}-\eqref{2ineq:def_class2} impose that the number of vertices at distance $r$ from $S$ is the same as that of $u$, for all the vertices in the class with representative $u$. The big-$M$ constant $M$, must be sufficiently big to deactivate the constraint when $q_{uv}=0$; we use the value $n-k$.\\

The above formulation has $\mathcal O(|V|)$ binary variables $\mathbf s$ and $\mathcal O(|V|^2)$ binary variables $\mathbf q$. The number of general integer decision variables $\mathbf t$ is $\mathcal O(|V||R|)$. The number of constraints is dominated by $\mathcal O(|V|^2)+\mathcal O(|V||R|)$.

Unfortunately, the above formulation does not guarantee that classes are \emph{maximal}. That is, it may produce two classes with exactly the same number of vertices at distance $r$ from $S$, for all values of $r\in R$. In the previous work \cite{Fernandez2023} this was avoided by imposing that if two vertices $u, v$ are not in the same class (and they were not in $S$), there is at least one vertex in $w\in S$ such that $d_{uw}\ne d_{vw}$. Of course, this condition is no longer valid. Instead, what we want to model is that if $u, v$ are not in the same class (and they are not in $S$), then there must be some $r\in R$ such that
$t_{ur}\ne t_{vr}$. This condition is difficult to model because it may happen that $t_{ur}>t_{vr}$, or $t_{ur}<t_{vr}$. 
Still, we observe that for all $u\in V$, $\sum_{r\in R}t_{ur}=|S|$ is constant. Hence, when for a given pair, $u,v\in V$, if $t_{ur}< t_{vr}$ for some $r\in R$, then there must exist a different $r'\in R\setminus\{r\}$ such that $t_{ur'}> t_{vr'}$.

Hence, for any two classes, with representatives $u$ and $v$, respectively ($u\ne v$),  we will impose that that $t_{ur}> t_{vr}$ for some $r\in R$. For this, we introduce another set of binary decision variables, that we denote by $\delta_{uvr}$, $u,v\in V, r\in R$
such that
\begin{align}
\delta_{uvr}=1 & \iff t_{ur}>t_{vr}.  \qquad && \nonumber
\end{align}

The following set of constraints are needed to ensure that the new variables model the explained logic:

\begin{align}
& \delta_{uvr}+q_{uv}\leq 1 && u,v\in V, u<v,\, r\in R\label{logigc1}\\
& q_{uu}+q_{vv}\leq 1 + \sum_{r\in R}\delta_{uvr}\qquad && u,v\in V,\, u<v\,  \label{ineq:q-delta}\\
& 	\delta_{uvr}\leq t_{ur}-t_{vr}+(n-k)[1-\delta_{uvr}] \qquad && u,v\in V,\, u<v,\, r\in R\label{2ineq:last0}\\
& 	t_{ur}-t_{vr}\leq (n-k)[1+s_u-q_{uv}]\qquad && u,v\in V,\, u<v,\, r\in R\label{2ineq:last1}\\
& 	t_{vr}-t_{ur}\leq (n-k)[1+s_u-q_{uv}]\qquad && u,v\in V,\, u<v,\, r\in R\label{2ineq:last2}
\end{align}

By \eqref{logigc1}, the differences $\delta_{uvr}$ are only computed for vertices in different classes.
Constraints \eqref{ineq:q-delta} impose that, when $u$ and $v$ are representatives of different classes, their respective classes must have a different number of vertices at distance $r$ from $S$ for at least one value of $r\in R$.
The right hand side of \eqref{2ineq:last0} guarantees that when $\delta_{uvr}=1$, then $1\leq t_{ur}-t_{vr}$, i.e.  $t_{vr}<t_{ur}$. Note that when $\delta_{uvr}=0$ the constraint is trivially satisfied as the difference $t_{vr}-t_{ur}$ is never greater than $n-k$, which is an upper bound on the number of vertices in any $Q$-class.
The reverse condition is imposed in \eqref{2ineq:last1}-\eqref{2ineq:last2}. When  $t_{ur}>t_{vr}$, then \eqref{2ineq:last1} imposes that $1+s_u-q_{uv}\ne 0$, i.e. $v$ cannot belong to the class represented by $u$. A similar logic is derived from \eqref{2ineq:last2} when  $t_{vr}>t_{ur}$.\\

The size of the formulation has increased in $\mathcal O(|V|^2|R|)$ binary decision variables and a number of constraints, dominated by $\mathcal O(|V|^2|R|)$.\\

It is indeed possible to develop an alternative formulation using binary variables only. In particular,
for all $u\in V$, $r\in R$, $h\in H=\{0, \dots, n-k\}$ we may introduce a binary decision variable:
$$z_{urh}=1\iff\text{there are $h$ vertices of $S$ at distance $r$ from $u$}.$$
Then, we have $$t_{ur}=\sum_{h\in H}h\, z_{urh},$$
provided that $s_u+\sum_{h\in H}z_{urh}=1$.
Hence, we can eliminate the general integer variables $t_{ur}$ and obtain a formulation with binary variables only.

While the advantage of such a model is that it produces tight linear programming bounds, it increases the number of decision variables in $\mathcal{O}({|V||R||H|})$. Preliminary testing indicates that the memory requirements of such a formulation only allow to solve small-size instances. Hence, this alternative is not further explored.

\section{Implementations}
In this section we present the numerical results of the computational experiments we have carried out with the formulation presented in the previous section. 


All the computational tests have been carried out in an AMD Ryzen 7 PRO 2700U 2.20 GHz with 8 GB RAM, under Windows 10 Pro as operating system. Formulation $F$  has  been coded in Mosel 5.6.0 using as solver Xpress Optimizer Version 38.01.01  \cite{Xpress}.

For the experiments we have considered the following sets of benchmark instances, which were already used in \cite{Fernandez2023}:
\begin{itemize}
\item $\mathcal{S}$ instances: General sparse graphs with a number of vertices $n\in\{50, 100\}$ and maximum vertex degree $\delta\in\{5, 10\}$ for $n=50$, and $\delta\in \{5, 10, 15, 20\}$, for $n=100$.
Originally, directed graphs are generated and then all directions removed.
The arcs of the graph are generated by iteratively \emph{exploring} its vertices and randomly generating up to $\delta$ end-nodes (from the original vertex set) for the arcs with origin at the current vertex. For each combination of $n$ and $\delta$ two instances have been generated.
\item $\mathcal{D}$ instances: General dense graphs with a number of vertices $n\in\{50, 100\}$, and vertex degree $\delta\in\{40, 45\}$ for $n=50$, and $\delta\in\{90, 95\}$ for $n=100$. 
For each combination of $n$ and $\delta$ two instances have been generated by removing $\delta$ randomly generated edges from the complete graph $K_n$.
\end{itemize}

Each instance of the classes $\mathcal{S}$ and $\mathcal{D}$ has been solved for all values of $k\in\{1, 2, 3, 4,  5, 6\}$. For these instances, a time limit of 7,200 seconds was set for each run.
The results are summarized in Tables \ref{Tab:S} and \ref{Tab:D} for sparse ($\mathcal S$) and dense ($\mathcal D$) graphs, respectively.
The tables show the values of the instance parameters in the first two columns, followed by $k$ blocks, with two columns each, corresponding to the considered values of $k$. The first column in each block, labeled with $|S|$ gives the values of the $k$-multiset antidimension of the considered graph ($\adimms_k(G)$). 
The second column in each block, labeled with $\emph{CPU}$ gives the computing time required by the solver to obtain a provable optimal solution, or \emph{``TL''} when the time limit was reached. When the time limit was reached, the entry under column $|S|$  gives the value of the best solution found (although the optimality of such solution was not proven), or \emph{``-''} when no feasible solution was found.
The tables have one row for each of the two instances with the same characteristics.
%

\begin{table}[ht]
\centering
\scriptsize
\begin{center}
\begin{tabular}{|cc|cr|cr|cr|cr|cr|cr|}												
\hline
\multirow{2}{*}{$n$} & \multirow{2}{*}{$\delta$} & \multicolumn{2}{c|}{$k=1$}  & \multicolumn{2}{c|}{$k=2$}  & \multicolumn{2}{c|}{$k=3$}  & \multicolumn{2}{c|}{$k=4$} & \multicolumn{2}{c|}{$k=5$} & \multicolumn{2}{c|}{$k=6$}\\
\cline{3-14}												
 &&	\multicolumn{1}{c}{$|S|$} & \multicolumn{1}{r|}{CPU} &	\multicolumn{1}{c}{$|S|$} & \multicolumn{1}{r|}{CPU}&	 \multicolumn{1}{c}{$|S|$} &	 \multicolumn{1}{r|}{CPU}&	 \multicolumn{1}{c}{$|S|$} &	 \multicolumn{1}{r|}{CPU} &	 \multicolumn{1}{c}{$|S|$} &	 \multicolumn{1}{r|}{CPU} &	 \multicolumn{1}{c}{$|S|$} &	 \multicolumn{1}{r|}{CPU}\\	
 \hline
\multirow{4}{*}{50} & \multirow{2}{*}{5}	    &   2 &   1.2$\,$	&  2  &    2.8$\,$	& -	&    TL$\,$   &	1 &   0.9$\,$  & - &   TL$\,$  & - & TL$\,$\\
                    & 	                        &   2 & 425.7$\,$	&  1  &    1.7$\,$	& -	&    TL$\,$   &	- &    TL$\,$  & - &   TL$\,$  & - & TL$\,$\\
                    & \multirow{2}{*}{$10\,$}	& 	2 &   2.2$\,$   &  2  & 1478.9$\,$  &  2 & 2569.2$\,$ & 2 &	4747.8$\,$ &  1 & 1.05$\,$ & - & TL$\,$\\
                    &                    	    & 	2 &   1,6$\,$   &  2  &    1.8$\,$  &  2 & 1523.5$\,$ & 3 &	1275.9$\,$ &  1	& 1.06$\,$ & - & TL$\,$\\	
\hline	
\multirow{8}{*}{100} & \multirow{2}{*}{5}	    &   2 &  11.5$\,$   &  -  &   TL$\,$    & -  &    TL$\,$  &  - &  TL$\,$   &  2 &  5.9$\,$ & - & TL$\,$\\
         	     &                         &   2 &  12,7$\,$   & -	 &   TL$\,$	   & 1	&   7.8$\,$  &  - &  TL$\,$	  &  - &   TL$\,$ & - & TL$\,$\\
                     &  \multirow{2}{*}{$10\,$} &   2 &  10.1$\,$   &  -	 &   TL$\,$    & -	&    TL$\,$	 &  - &  TL$\,$   &  - &   TL$\,$ & - & TL$\,$\\
                     &                          &   2 &  10.3$\,$   &  98 &   TL$\,$	   &  -	&    TL$\,$	 &  - &  TL$\,$	  &  - &   TL$\,$ & - & TL$\,$\\
                     & \multirow{2}{*}{$15\,$}  &   2 &  10.2$\,$   &  2	 &  8.7$\,$	   & 2	& 5154.9$\,$ &  - &  TL$\,$	  &  - &   TL$\,$ & 1 & 6.9$\,$\\
                      &                         &	  2	&  10.2$\,$	  &  2	&  9.2$\,$	  & -  &     TL$\,$ &  - &  TL$\,$	 &  1 &  8.1$\,$ & - & TL$\,$\\
                    &  \multirow{2}{*}{$20\,$}  &  87 &    TL$\,$	&  11 &    TL$\,$   & 2	 &    7.8$\,$  &  2 &  6.7$\,$   & 89	&  	TL$\,$ & - & TL$\,$\\
                    &                           &  2  & 199.0$\,$	&  2  & 640.0$\,$   & -	 &    TL$\,$  &  - &  TL$\,$   &  -	&  	TL$\,$ & - & TL$\,$\\
\hline
\end{tabular}
\caption{Summary of results for $\mathcal{S}$ instances: sparse graphs with $n$ vertices and vertex degree $\delta$.}\label{Tab:S}
\end{center}
\end{table}

\begin{table}[ht]
\centering
\scriptsize
\begin{center}
\begin{tabular}{|cc|cr|cr|cr|cr|cr|cr|}												
\hline
\multirow{2}{*}{$n$} & \multirow{2}{*}{$\delta$} & \multicolumn{2}{c|}{$k=1$}  & \multicolumn{2}{c|}{$k=2$}  & \multicolumn{2}{c|}{$k=3$}  & \multicolumn{2}{c|}{$k=4$} & \multicolumn{2}{c|}{$k=5$} & \multicolumn{2}{c|}{$k=6$}\\
\cline{3-14}												
 &&	\multicolumn{1}{c}{$|S|$} & \multicolumn{1}{r|}{CPU} &	\multicolumn{1}{c}{$|S|$} & \multicolumn{1}{r|}{CPU}&	 \multicolumn{1}{c}{$|S|$} &	 \multicolumn{1}{r|}{CPU}&	 \multicolumn{1}{c}{$|S|$} &	 \multicolumn{1}{r|}{CPU} &	 \multicolumn{1}{c}{$|S|$} &	 \multicolumn{1}{r|}{CPU} &	 \multicolumn{1}{c}{$|S|$} &	 \multicolumn{1}{r|}{CPU}\\	
 \hline
\multirow{4}{*}{50} & \multirow{2}{*}{40}	    &   2 &   58.6$\,$	&  2  &    372.3$\,$	& 2	&   391.8$\,$   & 1  &    0.9$\,$  & 1 &   0.6$\,$  & 1 & 0.5$\,$\\
                    & 	                        &   2 &  139.9$\,$	&  2  &    225.8$\,$	& 2	&    34.4$\,$   & 1  &    1.9$\,$  & 1 &   6.1$\,$  & 1 & 2.2$\,$\\
                    & \multirow{2}{*}{$45\,$}	& 	2 &  470.7$\,$  &  2  &    370.6$\,$    &  2 & 1018.2$\,$   & 2  &	470.8$\,$  & 2 & 162.9$\,$  & 2 & 283.8$\,$\\
                    &                    	    & 	2 &  609.0$\,$  &  2  &    899.1$\,$    &  2 &  186.2$\,$   & 2  &	579.1$\,$  & 2 & 261.4$\,$  & 2 & 167.5$\,$\\	
\hline	
\multirow{4}{*}{100} & \multirow{2}{*}{90}	    &   2 &  2096.5$\,$   &  2  &   TL$\,$    & -  &    TL$\,$  &  1 &  5.3$\,$   &  - &   TL$\,$ & - & TL$\,$\\
         	     &                         &   2 &  1680.4$\,$   & 73  &   TL$\,$	 & -  &    TL$\,$  &  1 & 17.6$\,$	 &  1 &  4.3$\,$ & 7 & TL$\,$\\
                     &  \multirow{2}{*}{$95\,$} &  87 &      TL$\,$   &  -  &   TL$\,$    & 6  &    TL$\,$  &  - &   TL$\,$   &  - &   TL$\,$ & - & TL$\,$\\
                     &                          &  92 &      TL$\,$   &  -  &   TL$\,$	 & 2  &    TL$\,$  &  - &   TL$\,$	 &  - &   TL$\,$ & - & TL$\,$\\
\hline
\end{tabular}
\caption{Summary of results for $\mathcal{D}$ instances: dense graphs with $n$ vertices and degree $\delta$.}\label{Tab:D}
\end{center}
\end{table}

From the above tables, it seems clear that, independently of whether the considered graph is sparse or dense, and independently of the value of the parameter $k$, instances with an optimal value $|S|=1$, are \textit{easy} to solve, as they can be solved within a few seconds. 
This behavior changes remarkably for instances with $|S|>1$, which are notably more challenging to solve.
As can be seen, with very few exceptions, both for sparse and dense graphs, instances with $n=100$ could not be solved to proven optimality, nor their infeasibility proven, within the maximum computing time.
For most of such instances, no feasible solution was found within the time limit.  Moreover, in the (few) cases in which a feasible solution was found, $|S|$ is usually very high, suggesting that such solutions are far from optimal.
If we restrict our analysis to instances with $n=50$, which could be optimally solved in most cases, the results in Table \ref{Tab:S}, indicate that for sparse graphs, the difficulty in finding optimal solutions increases with the value of the parameter $k$. In particular, for sparse graphs with $k=6$ no feasible solution was found for any of the tested instances. It is possible that no feasible solution exists for these instances, although their infeasibility could neither be proven or disproven within the time limit.
The behavior with sparse instances is opposite to that of dense graphs, where all instances with $n=50$ could be solved to proven optimality, and the difficulty for solving the instances seems to decrease as $k$ increases.

We also observe that with one single exception, all instances that could be solved to proven optimality within the time limit, have an optimal value $|S|\in\{1, 2\}$. The exception corresponds to an sparse graph instance with $n=50$ and $k=4$, whose optimal value is $|S|=3$. In addition, we also observe that no instance with $k=2$ proved to be certainly unfeasible. This contributes to our idea concerning the smallest possible value for $\kappa(G)$. That is, we suspect that in general any graph $G$ satisfies that $\kappa(G)\ge 2$.

We close this experiment analysis with the results obtained with two cycle instances, one with $37$ vertices, denoted as $\mathcal C_{37}$, and the other one with $40$ vertices, denoted as $\mathcal C_{40}$. These two instances have been solved for all the values of the parameter $k$ ranging in $\{1, 2, \dots, 37\}$. A time limit of $3,600$ seconds was set for each run. As expected, the infeasibility of most instances was detected by the solver within the time limit, and only in a few cases the solver found a provable optimal solution or was not able to conclude that the instance was infeasible.  The results are summarized in Table \ref{Tab:C} where, we only show the values of the parameter $k$ for which the infeasibility was not established for at least one of the two instances.
For the instances for which an optimal solution was found, the column under $|S|$ gives the optimal value, and the column under CPU the computing time needed by the solver to prove optimality. Otherwise, the time limit was reached without finding any feasible solution. This is indicated with an entry  with an entry $``-''$ in column $|S|$ and an entry ``TL'' in column CPU.
Empty entries correspond to infeasible instances for the corresponding value of $k$.
\addtolength{\tabcolsep}{5pt}
\begin{table}[ht]
\centering
\scriptsize
\begin{center}
\begin{tabular}{|c|cr|cr|}												
\hline
\multirow{2}{*}{$k$} & \multicolumn{2}{c|}{$\mathcal C_{37}$} & \multicolumn{2}{c|}{$\mathcal C_{40}$}\\
\cline{2-5}
& \multicolumn{1}{c}{$|S|$}& \multicolumn{1}{c|}{CPU}& \multicolumn{1}{c}{$|S|$}& \multicolumn{1}{c|}{CPU}\\
\hline
1&  2 & 2.6 & 1 & 2.9\\
2&  1 & 2.2 & 3 & 3.2   \\
3&  - & TL&  - & TL              \\
4&    - & TL & 4 & 132.0         \\
5&  - & TL & 5 & 995.9       \\
6&    & & - & TL            \\
7&    & & - & TL  \\
8&     & & 8 & 394.5      \\
10&    & & 10 & 252.2    \\
16&   & & 8 & 60.7 \\
20&  & & 20 & 102.2\\
\hline
\end{tabular}
\caption{Summary of results for $\mathcal C_{37}$ and $\mathcal C_{40}$. Only values of $k$ for which at least one of the two instances was not proven infeasible are shown. }\label{Tab:C}
\end{center}
\end{table}

\subsection{The $(k, \ell)$-multiset anonymity met by the social graphs of the experiments}

The computational results described above show that, in general, graphs randomly generated are usually satisfying a very small privacy features with respect to active attacks to its privacy, under the assumption of the existence of one or two attacker vertices ($\ell = 1$ or $\ell = 2$). In addition, the low efficiency of the computations are weakly contributing to get some strong conclusions on this regard. In particular, we conclude that the main part of the graphs used are satisfying $(2, 2)$-anonymity (in the presence of $2$ attacker nodes).

The results obtained show a slightly profit of the $(k, \ell)$-multiset anonymity with respect to its predecessor the $(k, \ell)$-anonymity, which is somehow not surprising since the $(k, \ell)$-multiset anonymity better catches the real behavior of privacy properties of social graphs, with respect to our settings on active attacks.

\section{Concluding remarks}

According to the presented results, we next describe some possible open question that might be of interest for future researches in this direction.

\begin{itemize}
    \item The case of cycles $C_n$ has been surprisingly challenging in our investigation. In this sense, the following question might be of interest: Is it true that for an integer $n\ge 4$ the cycle $C_n$ has a $k$-MARS if and only if $k$ divides $n$, or $k$ divides $2n$? Notice that the results from the implementations on cycles (see Table \ref{Tab:C}) support a positive answer for this.
    \item In contrast with the classical $k$-antiresolving set (using vectors of distances), it seems that every graph has $k$-MARS for every $k\in \{1,2\}$. Notice that there are several graphs $G$ which are $1$-metric antidimensional, which are those graphs that only have $1$-antiresolving sets. Such graphs represent the worst case scenario for privacy properties with respect to the $(k,\ell)$-anonymity measure (see \cite{cicerone2025,Trujillo2016} for results in this direction). We have proved for example that $\kappa(T)\ge 2$ for any tree $T$, and moreover, the results of the implementations show no example of a graph $G$ with $\kappa(G)=1$. Hence: Is it true that $\kappa(G)\ge 2$ for any connected graph $G$?
    \item Theorem \ref{th:trees-kappa} and the comments before it suggest considering the characterization of the class of graphs $G$ such that $\kappa(G)=2$, or at least the class of trees satisfying such property.
    \item A few partial results on the $k$-multiset antidimension of binary trees have been presented in Subsection \ref{subsec:computing}. In this sense, it would be desirable to complete such study, and indeed to generalize it to at least all $n$-ary trees.
    \item Although it might be natural to think that finding the $k$-multiset antidimension of graphs is an NP-hard problem, making the formal study of this  complexity problem seems to be worthwhile.
\end{itemize}

\section*{Acknowledgments}

This research has been partially supported by the ``HERMES'' project and the INCIBE-URV Cybersecurity Chair funded by the European Union NextGenerationEU/PRTR via INCIBE. E. Fern\'andez, D. Kuziak, M. Mu\~noz-M\'arquez and I.\ G.\ Yero have been partially supported by ``Ministerio de Ciencia, Innovaci\'on y Universidades'' through the grants PID2019-105824GB-I00 and PID2023-146643NB-I00, and Cadiz University Research Program. D. Kuziak and I. G. Yero have been also partially supported by ``Plan Propio de Apoyo y Est\'imulo a la Investigaci\'on y la Transferencia, Programa Operativo FEDER Andaluc\'ia 2021--2027'', reference number FEDER-UCA-2024-A2-16. R. Trujillo-Rasua was supported by a Ramón y Cajal grant (RYC2020-028954-I) from the Spanish Ministry of Science and Innovation and the EU, as well as, by the project PROVTOPIA (PID2023-150098OB-I00), funded by MICIU/AEI/10.13039/501100011033 and FEDER (EU). Alejandro Estrada-Moreno was also supported by PROVTOPIA.

\medskip

\section*{Conflict of interest}
The authors do not have any financial or non financial interests that are directly or indirectly related to the work submitted for publication.

\section*{Data availability} 
No data was used for the research described in this paper.









\end{document}